\documentclass[12pt]{amsart}

\usepackage[all]{xy}
\usepackage[latin1]{inputenc}
\usepackage{hyperref}
\usepackage{amssymb}
\newcommand{\C}{{\mathbb C} }
\newcommand{\cB}{{\mathcal B} }
\newcommand{\cC}{{\mathcal C} }
\newcommand{\cE}{{\mathcal E} }
\newcommand{\cL}{{\mathcal L} }
\newcommand{\cM}{{\mathcal M} }
\newcommand{\cO}{{\mathcal O} }
\newcommand{\cT}{{\mathcal T} }
\newcommand{\cW}{{\mathcal W} }
\newcommand{\cX}{{\mathcal X} }
\newcommand{\cZ}{{\mathcal Z} }
\newcommand{\cH}{{\mathcal H} }
\newcommand{\cK}{{\mathcal K} }
\newcommand{\wt}{\widetilde}
\newcommand{\pt}{\partial}
\DeclareMathOperator{\Der}{Der}
\DeclareMathOperator{\Hilb}{Hilb}
\def\ol#1{{\overline{#1}}}
\newtheorem{theorem}{Theorem}
\newtheorem{lemma}{Lemma}
\newtheorem{proposition}{Proposition}
\newtheorem{definition}{Definition}
\def\ks{Ko\-dai\-ra-Spen\-cer }
\def\ka{K{\"a}h\-ler }
\def\wp{Weil-Pe\-ters\-son }
\def\tei{Teich\-mül\-ler }
\def\ii{\sqrt{-1}}
\def\ddb{\sqrt{-1}\partial\overline{\partial}}
\def\we{\wedge}
\def\cinf{$C^\infty$}

\begin{document}

\title{K\"ahler structure on Hurwitz spaces}

\author[R.\ Axelsson]{Reynir Axelsson}
\address{Raunv\'{i}sindastofnun H{\'a}sk{\'o}lans, Dunhaga 3, is-107
Reykjav{\i}k, {\'I}sland}

\email{reynir@raunvis.hi.is}

\author[I.~Biswas]{Indranil Biswas}

\address{School of Mathematics, Tata Institute of Fundamental
Research, Homi Bhabha Road, Bombay 400005, India}

\email{indranil@math.tifr.res.in}

\author[G.~Schumacher]{Georg Schumacher}

\address{Fachbereich Mathematik und Informatik,
Philipps-Universit\"at Marburg, Lahnberge,
Hans-Meerwein-Strasse, D-35032 Marburg,Germany}

\email{schumac@mathematik.uni-marburg.de}

\date{}

\keywords{Hurwitz spaces, Weil-Petersson metric, Deligne
pairing, determinant bundle}

\subjclass[2000]{32G15, 14H10, 53C55}

\begin{abstract}
The classical Hurwitz spaces, that parameterize compact Riemann surfaces equipped with covering maps to ${\mathbb P}_1$ of fixed numerical type with simple branch points, are extensively studied in the literature. We apply deformation theory, and present a study of the \ka structure of the Hurwitz spaces, which reflects the variation of the complex structure of the Riemann surface as well as the variation of the meromorphic map. We introduce a generalized \wp \ka form on the Hurwitz space. This form turns out to be the curvature of a Quillen metric on a determinant line bundle. Alternatively, the generalized \wp \ka form can be characterized as the curvature form of the hermitian metric on the Deligne pairing of the relative canonical line bundle and the pull back of the anti-canonical line bundle on ${\mathbb P}_1$. Replacing the projective line by an arbitrary but fixed curve $Y$, we arrive at a generalized Hurwitz space with similar properties. The determinant line bundle extends to a compactification of the (generalized) Hurwitz space as a line bundle, and the Quillen metric yields a singular hermitian metric on the compactification so that a power of the determinant line bundle provides an embedding of the Hurwitz space in a projective
space.
\end{abstract}

\maketitle \tableofcontents

\section{Introduction}

\subsection {Hurwitz spaces}

The concept of a Riemann surface of an algebraic function as a branched covering of the complex projective line ${\mathbb P}_1$ was expounded by Riemann with astonishing clarity, despite predating the development of the abstract notion of a topological space, in a series of four papers in 1857. An $n$-sheeted branched covering  of ${\mathbb P}_1$ is uniquely determined, up to an isomorphism, by its branch points (there are only finitely many of them), and a finite number of certain combinatorial data, which may be described in several different fashions. The simplest case is that of {\it simple} coverings. These are $n$-sheeted branched coverings having at least $n-1$ points over each point of ${\mathbb P}_1$; in other words, all ramification points are simple, and there is at most one ramification point lying over each point of ${\mathbb P}_1$. These coverings were extensively studied by Hurwitz \cite{hu}. He showed that $n$-sheeted simple branched coverings with $b$ branch points can be made into a complex analytic manifold $\cH^{n,b}$, and by a thorough examination of the combinatorial data he could show that $\cH^{n,b}$ is connected. A
modern account of the theory introduced by Hurwitz was given by Fulton in
\cite{f}.

A finite, branched covering of ${\mathbb P}_1$ may be defined as a continuous mapping $\beta : X \to  {\mathbb P}_1$, where $X$ is a compact connected topological space, such that for every point $x \in  X$ there are open neighborhoods $U$ of $x$ in $X$ and $V$ of $\beta(x)$ in ${\mathbb P}_1$, together with homeomorphisms
$$
g : U\to \Delta  ,\quad  h : V \to  \Delta
$$
onto the unit disk $\Delta  \subset  \C$ such that $\beta(U)\subset V$ and $h\circ\beta\circ g^{-1}(z) = z^k$ for some integer $k\geq 1$. This $x$ is a simple ramification point if $k = 2$. Given a branched covering $\beta$, there is a unique complex structure on $X$ that makes the mapping $\beta$ holomorphic. An element of the Hurwitz space $\cH^{n,b}$ is a representative of an equivalence class of simple $n$-sheeted branched coverings of ${\mathbb P}_1$. Two such coverings
$$
\beta_1 : X_1 \to  {\mathbb P}_1~ ~\text{ and }~  ~ \beta_2 : X_2 \to  {\mathbb P}_1
$$
are called \textit{equivalent} if there is a homeomorphism $\chi : X_1 \to X_2$ such that $\beta_1 = \beta_2\circ\chi$; such a mapping $\chi$ is necessarily biholomorphic with respect to the above mentioned complex structures on $X_1$ and $X_2$. Given a simple $n$-sheeted covering $\beta : X \to  {\mathbb P}_1$ with branch points $p_1,\cdots, p_b$, choose pairwise disjoint open disks $D_1,\cdots ,D_b$ in ${\mathbb P}_1$ around the branch points, and choose for each point
$$
q := (q_1,\cdots,q_b) \in D_1\times\cdots\times D_b
$$
a diffeomorphism $\alpha : {\mathbb P}_1 \to {\mathbb P}_1$ carrying $p_j$ to $q_j$ such that $\alpha$ agrees with the identity map of ${\mathbb P}_1$ on the complement of the disks. Set $\beta_q := \alpha\circ\beta$. It is easily seen that $\beta_q$ is a simple $n$-sheeted covering, with branch points $q_1  ,\cdots  ,q_b$, and that the equivalence class of $\beta_q$ depends only on $q$, in particular, the equivalence class is independent of the choice of the diffeomorphism $\alpha$. The subsets
$$
U(\beta;  D_1,\cdots,D_b) :=  \{\beta_q \mid  q\in D_1\times \cdots \times D_b\}  \subset  \cH^{n,b}
$$
form a basis for a topology on $\cH^{n,b}$ such that the mapping
$$
U(\beta; D_1,\cdots,D_b) \to  D_1\times \cdots \times D_b  , \quad \beta_q \mapsto  q
$$
is a homeomorphism. Letting $D$ to be the image of $D_1\times\cdots\times D_b$ in the $b$-fold symmetric product $\text{Sym}^b( {\mathbb P}_1)$, we see that $D$ is evenly covered by the mapping $\delta : \cH^{n,b} \to  U$, where $U \subset \text{Sym}^b({\mathbb P}_1)$ is the locus of $b$ distinct points, and $\delta$ maps a simple branched covering $\beta$ to its set of branch points; the set $U$ may be identified with a Zariski open subset of the complex projective space ${\mathbb P}_b = \text{Sym}^b({\mathbb P}_1)$. Thus $\delta : \cH^{n,b} \to  U$ is a surjective finite sheeted unbranched covering map, and hence there is a unique complex structure on $\cH^{n,b}$ that makes the mapping $\delta$ holomorphic.

A nontrivial automorphism of a simple branched covering of ${\mathbb P}_1$ must keep the ramification points fixed and hence must be an involution, assuming that $b > 0$ (because the ramification number is two, any automorphism of the covering is an involution in a neighborhood of a ramification point in the domain Riemann surface). After taking the quotient by the involution we obtain an unbranched covering of ${\mathbb P}_1$, which must be a biholomorphic map. Thus such an automorphism can only exist (and exists) for $n= 2$. It follows that for $n\geq 3$, the simple branched coverings $\beta_s : X_s \to {\mathbb P}_1$, $s \in  \cH^{n,b}$, fit naturally together to form a family $\beta : \cX \to {\mathbb P}_1\times \cH^{n,b}$ of mappings over $\cH^{n,b}$, where $\cX$ is the union of the surfaces $X_s$ provided with a complex structure such that $\beta$ is holomorphic. This means that $\beta$ is a {\it deformation} of
$X$ {\it relative} to $\mathbb P_1$.

We can define a category of deformations of simple branched coverings of ${\mathbb P}_1$ parameterized by a complex space. An object of this category is a mapping $\beta_S : \cX \to   {\mathbb P}_1\times S$ of spaces over an arbitrary complex space $S$ (the parameter space) such that $\cX$ is flat over $S$, and for every $s$ in $S$, the induced mapping $\beta_s : \cX_s \to {\mathbb P}_1\times\{s\}$ of fibers is a simple branched covering. A morphism in this category is a pullback diagram induced by such a mapping via a holomorphic mapping $T \to   S$ of the parameter spaces. One can then show that the Hurwitz family represents a final object in this category for $n \geq  3$. This was proved by Fulton in a general algebraic setting
\cite{f}.

Our objective here is to consider Hurwitz spaces from the point of view of deformation theory.

\subsection{Description of results}

Generalizations of Hurwitz spaces have been carried out by various authors. The simplest generalization is to consider branched coverings $X \to Y$ of an arbitrary but fixed compact Riemann surface $Y$. A construction of these moduli spaces is described in Section~\ref{se:genHu}.

We compute the tangent spaces of these generalized Hurwitz spaces in Section~\ref{se:tangcoh}, and relate it to the space of infinitesimal deformations of $X$ as well as to the infinitesimal deformations of the simple branched covering $X \to Y$ with $Y$ fixed. Equip $X$ with the hyperbolic metric and also equip the fixed Riemann surface $Y$ with a metric of constant curvature. Given these metrics, in Section~\ref{se:WP} we introduce a generalized \wp metric on the base of a local, universal family in a functorial way so that it descends to the induced moduli stack. It descends  in the orbifold sense to the corresponding generalized Hurwitz space, which exists
as a coarse moduli space.

We show that the generalized \wp form satisfies a fiber integral formula, in particular, it is a \ka form according to Theorem~\ref{th:fibint}. We prove that the generalized \wp form is the curvature of a certain holomorphic hermitian line bundle. This holomorphic hermitian line bundle $\lambda$ can be realized as a determinant line bundle equipped with a Quillen metric (Theorem~\ref{th:linebundle}). This generalized \wp form also can be realized as the Deligne pairing $\langle\cK_{\cX/\cH}, \beta^* {\mathcal L}\rangle$ equipped with the natural hermitian metric, where ${\mathcal L}$ is a certain holomorphic hermitian (orbifold) line bundle on $Y$ (Section~\ref{se:delpa}). Namely $\cL$ is $K_Y$ or $K^{-1}_Y$, if ${\rm genus}(Y) >  1$ or ${\rm genus}(Y)  =   0$ respectively, and it defines a principal polarization in the case of ${\rm genus}(Y)  =  1$. The generalized \wp form extends, as a closed positive current, to a compactification of the Hurwitz space. From this fact it can be deduced using analytic methods that a power of $\lambda$ embeds the generalized (open) Hurwitz space into a projective space (Theorem~\ref{th:embed}).

{\em Acknowledgement.} The authors would like to thank the referee for helpful remarks.

\section{Relative tangent cohomology for Hurwitz spaces}\label{se:tangcoh}

We recall some facts about tangent cohomology that was introduced by Palamodov in \cite{pal}, and apply these to the Hurwitz spaces.

Let $A \to  B$ be a morphism of analytic $\C$-algebras, and let $M$ be a finite $B$-module. Then we denote by $T^q(B/A,  M)$ and $T^q(B,  M)$ the relative tangent functor and the absolute tangent functor respectively; we recall that $T^0(B/A, M) = \Der_A(B,  M)$ is the module of $A$-linear $M$-valued derivations of $B$, while $T^1(B/A,  M)$ stands for the $B$-module of equivalence classes of analytic extensions of the $\C$-algebra $B$ by $M$ over $A$. As usual, the obstructions lie in $T^2(B/A, M)$. According to \cite[Proposition 1.6]{pal}, there is a long exact sequence of $B$-modules
\begin{gather}\label{eq:long}
0 \to   T^0(B/A, M) \to  T^0(B, M)  \to  T^0(A, M)  \to  \\ \nonumber
\strut \hspace{1cm} T^1(B/A, M)  \to   T^1(B, M)  \to   T^1(A,
M)  \to  \cdots  ,
\end{gather}
where $M$ is considered as an $A$-module using the homomorphism $A  \to  B$. For a holomorphic map $\beta_0 :  X \to  Y$ together with a coherent $\cO_X$-module $\cM$, these objects give rise to coherent sheaves, which are denoted by $\cT^q_{X/Y}(\cM)$ etc. If $\cM = \cO_X$, then the entry $\cM$
is dropped from the notation.

Let $\beta_0 : X \to Y$ be a non-constant holomorphic map between compact Riemann surfaces. We will denote the genus of $X$ and $Y$ by $p(X)$ and $p(Y)$ respectively. We assume $p(X)>1$. In the classical case of Hurwitz spaces, we have $Y = {\mathbb P}_1$. Assume that $\beta_0$ has only simple ramification points (mapping to different branch points) so that for the ramification divisor $B = \sum_1^b z_j$ on $X$ the points $\beta_0(z_j)$
are distinct.

Let $D = (\{0\}, \C[Z]/(Z^2))$ be the double point. Infinitesimal deformations $\psi$ of $X/Y$ are isomorphism classes of Cartesian diagrams
\begin{equation}\label{eq:def1}
\xymatrix{X \ar[r] \ar[d]^{(\beta_0,0)}  & \cX \ar[d]^{(\beta,f)}   \\
 Y \times \{0\} \ar@{^(->}[r] &Y \times D}
\end{equation}
where $f$ is a flat proper morphism. Let
\begin{equation}\label{m1}
\nu(\psi)  \in  H^1(X,  T_X)
\end{equation}
be the induced deformation of $X$, i.e.\ $\nu(\psi)$ is given by
$$
\xymatrix{X \ar[r] \ar[d]  & \cX \ar[d]^f   \\  \{0\} \ar@{^(->}[r] & D}
$$
and $\nu(\psi) =  0$ if the diagram in \eqref{eq:def1} is isomorphic to
\begin{equation}\label{eq:def3}
\xymatrix{X \ar[r] \ar[d]& X\times D \ar[d]\\  Y \times \{0\}
\ar@{^(->}[r]
&Y \times D}
\end{equation}

It is straightforward to deduce the following lemma.

\begin{lemma}\label{le:tanshe}
The long exact sequence of tangent sheaves induced by \eqref{eq:long} gives rise to a short exact sequence
$$
0  \to  \cT^0_{X}  \to  \beta_0^*\cT^0_Y  \to  \cT^1_{X/Y}  \to  0
.
$$
The coherent sheaf $\cT^1_{X/Y}$ is supported on the set of ramification points $\{z_j\}_{j=1}^b$, and it is isomorphic to
$\oplus^b_{j=1} \cO_X/\mathfrak m_{z_j}$. Furthermore,
$$
\beta_0^*\cT^0_Y  \simeq  \cT^0_X(B)  .
$$
\end{lemma}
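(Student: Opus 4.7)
My plan is to sheafify the long exact sequence \eqref{eq:long} applied to $\beta_0 : X \to Y$ with coefficients $\cM = \cO_X$. Its initial portion reads
\[
0 \to \cT^0_{X/Y} \to \cT^0_X \to \beta_0^*\cT^0_Y \to \cT^1_{X/Y} \to \cT^1_X \to \cdots,
\]
where the third term is $\beta_0^* T_Y$ because $\cO_Y$-derivations into $\cO_X$ are, by the universal property of K\"ahler differentials (together with local freeness of $\Omega^1_Y$), sections of $\beta_0^* T_Y$. To collapse this to the desired short exact sequence I will verify two vanishings. First, $\cT^1_X = 0$ because $X$ is smooth. Second, $\cT^0_{X/Y} = \mathcal{H}om_{\cO_X}(\Omega^1_{X/Y},\cO_X)$ vanishes: the relative cotangent sequence exhibits $\Omega^1_{X/Y}$ as the cokernel of a map of line bundles on the Riemann surface $X$ that is an isomorphism off the finitely many ramification points, so it is torsion, and torsion sheaves admit no maps into the torsion-free $\cO_X$.

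Next I would determine $\cT^1_{X/Y}$ stalkwise. At points where $\beta_0$ is \'etale the induced map of local analytic rings is an isomorphism, hence $\cT^1_{X/Y}$ vanishes there. At a simple ramification point $z_j$ I will use the local normal form $w = z^2$ to present the stalk as the hypersurface $B = A[z]/(z^2-w)$ over $A := \cO_{Y,\beta_0(z_j)}$. For such a complete intersection the standard formula gives
\[
T^1(B/A,B) \;\simeq\; B\big/\bigl(\tfrac{d}{dz}(z^2-w)\bigr) \;=\; B/(2z) \;\simeq\; \C,
\]
because $z$ generates the maximal ideal of $B$. Assembling these local pieces over the $b$ ramification points yields the claimed direct sum description.

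The last identification I will deduce by examining the inclusion $T_X = \cT^0_X \hookrightarrow \beta_0^* T_Y$ furnished by the short exact sequence: this is the differential $d\beta_0$, which in the coordinates $w = z^2$ sends $\partial/\partial z$ to $2z\,\partial/\partial w$, so it vanishes to order exactly one at each $z_j$ and is an isomorphism elsewhere. Equivalently the dual map $\beta_0^*\Omega^1_Y \to \Omega^1_X$ realises $\beta_0^*\Omega^1_Y$ as the subsheaf $\Omega^1_X(-B)$, and dualising gives $\beta_0^* T_Y \simeq T_X(B)$ as line bundles on $X$.

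I expect the $T^1$ computation at the ramification points to be the main obstacle, i.e.\ unpacking Palamodov's definition for the hypersurface $B = A[z]/(z^2-w)$ and showing it collapses to the residue field. Once that is in hand, the rest of the lemma is a formal consequence of the long exact sequence combined with the very same local coordinate $w = z^2$, which simultaneously supplies the skyscraper description of $\cT^1_{X/Y}$ and the twist by $B$ in the identification of $\beta_0^*T_Y$.
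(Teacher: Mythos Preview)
Your argument is correct and supplies precisely the details that the paper omits: the paper offers no proof of this lemma beyond the remark that it is ``straightforward to deduce.'' Your sheafification of \eqref{eq:long}, the two vanishings $\cT^0_{X/Y}=0$ (via torsion of $\Omega^1_{X/Y}$) and $\cT^1_X=0$ (smoothness of $X$), the hypersurface computation $T^1(B/A,B)\simeq B/(2z)\simeq\C$ at each simple ramification point, and the identification $\beta_0^*\cT^0_Y\simeq \cT^0_X(B)$ via the order-one vanishing of $d\beta_0$ are all standard and constitute exactly what ``straightforward'' means here.
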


Tangent cohomology groups and the cohomology with values in the tangent cohomology sheaves are related by a spectral sequence $E^{p,q}_2 = H^p(X, \cT^q_{X/Y})$ that converges to $T^{p+q}(X/Y)$, (cf.\ \cite[Theorem 4.1]{pal}). We first note $T^0(X/Y)=H^0(X,\cT^0_{X/Y})$. We use the standard sequence involving terms of low order that is induced by a spectral sequence of the above type. Because of $\dim X=1$ the latter yields the
following exact sequences.
$$
0\to H^1(X,\cT^0_{X/Y})\to T^1(X/Y) \to H^0(X,\cT^1_{X/Y})\to 0.
$$
Furthermore the standard sequence (in dimension one) implies
$$
T^2(X/Y)= H^1(X, \cT^1_{X/Y}).
$$
Using Lemma~\ref{le:tanshe} and the above argument we obtain the following:
\begin{lemma}\label{le:tanco}
The following isomorphisms hold:
$$
T^0(X/Y) =0 , \quad T^1(X/Y)=H^0(X, \cT^1_{X/Y}), \quad T^2(X/Y)=0.
$$
\end{lemma}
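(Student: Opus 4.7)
The proof is essentially a bookkeeping exercise combining Lemma~\ref{le:tanshe} with the spectral sequence low-term identifications already recorded in the paragraph preceding the statement. I would organize it around three separate assertions.

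First, I would extract from Lemma~\ref{le:tanshe} that $\cT^0_{X/Y} = 0$. This is because the short exact sequence displayed there is the truncation of the Palamodov long exact sequence $0 \to \cT^0_{X/Y} \to \cT^0_X \to \beta_0^*\cT^0_Y \to \cT^1_{X/Y} \to \cdots$, so the short exact sequence starting at $\cT^0_X$ forces the kernel $\cT^0_{X/Y}$ to vanish. (Alternatively, this follows intrinsically because the map $\cT^0_X \to \beta_0^*\cT^0_Y$ is the differential $d\beta_0$ between line bundles on a smooth connected curve and is not identically zero, hence injective.) Consequently, from $T^0(X/Y) = H^0(X,\cT^0_{X/Y})$ recorded in the text, the first assertion $T^0(X/Y) = 0$ is immediate.

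Second, for $T^1(X/Y)$, I would feed the vanishing $\cT^0_{X/Y} = 0$ into the five-term exact sequence already written out in the excerpt,
$$
0 \to H^1(X,\cT^0_{X/Y}) \to T^1(X/Y) \to H^0(X,\cT^1_{X/Y}) \to 0.
$$
The left term vanishes, yielding the isomorphism $T^1(X/Y) \cong H^0(X,\cT^1_{X/Y})$.

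Third, for $T^2(X/Y)$, I would use the identification $T^2(X/Y) = H^1(X,\cT^1_{X/Y})$ from the low-order sequence in dimension one, together with the explicit description in Lemma~\ref{le:tanshe} that $\cT^1_{X/Y} \simeq \bigoplus_{j=1}^b \cO_X/\mathfrak m_{z_j}$. Since this is a skyscraper sheaf supported at finitely many points, it is acyclic, so $H^1(X,\cT^1_{X/Y}) = 0$ and hence $T^2(X/Y) = 0$.

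There is no substantive obstacle: the entire content of Lemma~\ref{le:tanco} is already encoded in Lemma~\ref{le:tanshe} plus the observation that on a one-dimensional base, skyscraper sheaves and the vanishing coherent sheaf $\cT^0_{X/Y}$ collapse the spectral sequence to the stated isomorphisms. The only point worth flagging carefully in the write-up is the justification that $\cT^0_{X/Y}$ indeed vanishes, since the paper records this only implicitly by writing the Palamodov sequence as a short exact sequence beginning with $\cT^0_X$ rather than with $\cT^0_{X/Y}$.
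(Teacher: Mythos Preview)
Your proposal is correct and follows exactly the approach the paper indicates with the phrase ``Using Lemma~\ref{le:tanshe} and the above argument'': you spell out that $\cT^0_{X/Y}=0$ (implicit in the short exact sequence of Lemma~\ref{le:tanshe}) kills both $T^0(X/Y)=H^0(X,\cT^0_{X/Y})$ and the left term of the five-term sequence, and that the skyscraper description of $\cT^1_{X/Y}$ forces $H^1(X,\cT^1_{X/Y})=0$. Your remark that the vanishing of $\cT^0_{X/Y}$ is only implicit in the paper is a fair observation, and your alternative justification via injectivity of $d\beta_0$ between line bundles is a clean way to make it explicit.
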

In particular, $T^1(X/Y) \simeq  \C^b$, {\em all infinitesimal deformations of $X/Y$ are unobstructed}, and {\em semi-universal deformations are universal}. Now Lemma~\ref{le:tanshe} implies the following:

The tangent space $T^1(X/Y)=H^0(X,\cT^1_{X/Y})$ of the Hurwitz stack given at a point $\beta_0:X\to Y$ is related to the space of infinitesimal deformations of $X$ on one hand and to the space of infinitesimal deformations of $\beta_0$ (with $X$ and $Y$ fixed) on the other hand in the following way.

\begin{proposition}\label{le1}
There exists a long exact sequence
$$
0\to H^0(X, \beta_0^*\cT^0_{Y}) \to H^0(X,\cT^1_{X/Y})\stackrel{\nu}{\to}
H^1(X,\cT^0_X)\to H^1(X,\beta_0^*\cT^0_Y)\to 0 ,
$$
where $\nu$ was constructed in \eqref{m1}.
\end{proposition}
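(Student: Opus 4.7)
The plan is to take the cohomology long exact sequence of the short exact sheaf sequence from Lemma \ref{le:tanshe},
$$
0 \to \cT^0_X \to \beta_0^*\cT^0_Y \to \cT^1_{X/Y} \to 0,
$$
and truncate it by two vanishing results. Since $p(X) > 1$, the tangent sheaf $\cT^0_X = T_X$ has negative degree, so $H^0(X, \cT^0_X) = 0$. By Lemma \ref{le:tanshe}, $\cT^1_{X/Y} \cong \bigoplus_{j=1}^b \cO_X/\mathfrak{m}_{z_j}$ is a skyscraper sheaf supported at the ramification points, so $H^1(X, \cT^1_{X/Y}) = 0$. These two vanishings collapse the six-term cohomology sequence to the four-term exact sequence stated in the proposition, whose middle arrow is the connecting homomorphism $\delta: H^0(X, \cT^1_{X/Y}) \to H^1(X, \cT^0_X)$.

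The nontrivial remaining task, which I expect to be the main obstacle, is the identification of $\delta$ with the map $\nu$ from \eqref{m1}. By Lemma \ref{le:tanco}, $H^0(X, \cT^1_{X/Y}) \cong T^1(X/Y)$, while the spectral sequence $H^p(X, \cT^q_X) \Rightarrow T^{p+q}(X)$ mentioned in the paper degenerates for the smooth curve $X$ (since $\cT^q_X = 0$ for $q \geq 1$) and yields $H^1(X, \cT^0_X) \cong T^1(X) = H^1(X, T_X)$. Under these identifications, a direct \v{C}ech argument shows $\delta = \nu$: for $\psi \in H^0(X, \cT^1_{X/Y})$, the differences of local lifts $s_i \in \Gamma(U_i, \beta_0^*\cT^0_Y)$ on overlaps $U_i \cap U_j$ take values in $\cT^0_X = T_X$ and form a 1-cocycle representing the \ks class of the deformation of $X$ underlying $\psi$. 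This is the relative-over-$Y$ analog of the classical identification of the snake-lemma connecting map of the tangent sequence with the \ks map; once it is in place, the proposition follows immediately from the cohomology sequence and the two vanishings.
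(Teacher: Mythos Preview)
Your proposal is correct and follows the same route as the paper, which simply remarks that ``Lemma~\ref{le:tanshe} implies the following'' without further details; you have just made explicit what the paper leaves implicit, namely the two vanishings $H^0(X,\cT^0_X)=0$ (from $p(X)>1$) and $H^1(X,\cT^1_{X/Y})=0$ (skyscraper), together with the standard \v{C}ech/Kodaira--Spencer identification of the connecting map with $\nu$.
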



Note that $\textrm{deg}((\beta_0^*\cT_Y)^*\otimes K_X) = 4p(X)-4 -b$. Hence $H^1(X,  \beta_0^*\cT^0_Y) =  0$ by Serre duality if $b > 4p(X)-4$. So we have the following statement.

\begin{lemma}\label{le:largeorder}
The obstructions against extending an infinitesimal deformation of $X$ to a deformation of $X$ over $Y$ are the elements of $H^1(X, \beta_0^*\cT^0_Y)$. This space of obstructions vanishes if $b > 4p(X)-4$. In this case the space $H^0(X, \beta_0^*\cT^0_{Y})$ is zero, unless $Y=\mathbb P_1$.
\end{lemma}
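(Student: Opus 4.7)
The statement splits into three assertions, and the plan is to handle each directly. The first---that obstructions lie in $H^1(X,\beta_0^*\cT^0_Y)$---is essentially a reading of Proposition~\ref{le1}, whose tail
$$
H^0(X,\cT^1_{X/Y})\stackrel{\nu}{\to} H^1(X,\cT^0_X) \to H^1(X,\beta_0^*\cT^0_Y)\to 0
$$
presents $H^1(X,\beta_0^*\cT^0_Y)$ as the cokernel of $\nu$. Since $\nu$ sends an infinitesimal deformation of the covering $\beta_0$ to its underlying deformation of $X$ alone, a class in $H^1(X,\cT^0_X)$ lifts to an infinitesimal deformation of the pair $X/Y$ precisely when its image in $H^1(X,\beta_0^*\cT^0_Y)$ vanishes, which is the obstruction statement.

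For the vanishing of this $H^1$ under $b > 4p(X)-4$, the plan is to use the degree computation already displayed in the excerpt: by Lemma~\ref{le:tanshe} one has $\beta_0^*\cT^0_Y \simeq \cT^0_X(B) = K_X^{-1}(B)$, so
$$
\deg\bigl((\beta_0^*\cT^0_Y)^*\otimes K_X\bigr) = 4p(X)-4-b < 0.
$$
A line bundle of negative degree on a compact Riemann surface has no nonzero global sections, and Serre duality converts $H^0\bigl(X,(\beta_0^*\cT^0_Y)^*\otimes K_X\bigr) = 0$ into $H^1(X,\beta_0^*\cT^0_Y) = 0$.

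For the closing clause about $H^0$, the key observation is that the hypothesis itself restricts the target: Riemann-Hurwitz for a degree $n$ simply ramified covering gives
$$
b = 2p(X) - 2 + 2n\bigl(1-p(Y)\bigr),
$$
and combined with $b > 4p(X)-4$ this forces $n(1-p(Y)) > p(X)-1 > 0$, so $p(Y)=0$, i.e.\ $Y=\mathbb P_1$. Thus the regime of the lemma is compatible only with $Y=\mathbb P_1$, which is what the ``unless'' clause records; as a complementary remark, $\deg\beta_0^*\cT^0_Y = n(2-2p(Y))$ is negative as soon as $p(Y)\geq 2$, so $H^0(X,\beta_0^*\cT^0_Y)$ vanishes for any target of genus at least two. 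The only delicate step is reading the last clause correctly: since $p(X)>1$ together with $b>4p(X)-4$ already forces $Y=\mathbb P_1$, the clause is really a remark about which case can actually occur rather than an independent vanishing statement, and none of the individual steps presents a real technical obstacle.
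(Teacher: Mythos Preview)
Your argument is correct and, for the vanishing of $H^1(X,\beta_0^*\cT^0_Y)$, it is exactly the paper's: the degree computation $\deg\bigl((\beta_0^*\cT^0_Y)^*\otimes K_X\bigr)=4p(X)-4-b$ followed by Serre duality is precisely what the text does in the sentence preceding the lemma. The paper offers no separate proof of the first and third clauses.

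Your treatment of the last clause goes beyond the paper and in fact clarifies it. The paper does not spell out why $H^0(X,\beta_0^*\cT^0_Y)$ should vanish for $Y\neq\mathbb P_1$; your Riemann--Hurwitz computation shows that the hypothesis $b>4p(X)-4$ together with $p(X)>1$ already forces $p(Y)=0$, so the clause is a statement about which target can occur rather than a genuine vanishing result. This is the right reading: since $\deg\beta_0^*\cT^0_Y=b-2p(X)+2>2p(X)-2$ under the hypothesis, Riemann--Roch in fact gives $h^0>0$, so one cannot hope for a direct vanishing argument, and your vacuity interpretation is the only way the sentence can be true. The complementary degree remark for $p(Y)\geq 2$ is fine but, as you note, does not by itself cover the elliptic case $p(Y)=1$ (where $\beta_0^*\cT^0_Y\simeq\cO_X$ and $H^0\neq 0$); the Riemann--Hurwitz argument is what actually excludes it.
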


At the end of this section, we will now give another approach to the tangent cohomology $T^\bullet(X/Y)$, which will not be used later. We consider the more general situation from \cite{RW}. We denote the above homomorphism $\cT^0_X  \to \beta^*_0\cT^0_Y$ by $d\beta_0$. This map can be turned into a complex
$$
\xymatrix{
\cC^\bullet : & 0  \ar[r]& \cC^0 \ar[r]^{d\beta_0} &   \cC^1 \ar[r] &  0 }
$$
which fits into a short exact sequence $0 \to  {\cC'}^\bullet \to \cC^\bullet \to  {\cC''}^\bullet \to  0$ of complexes:
$$
\xymatrix{
 & 0 \ar[d]& 0 \ar[d] & \\
{\hspace{-10mm}{\cC'}^\bullet : \hspace{5mm}}0 \ar[r] & \cT^0_X
\ar[r]^{d\beta_0}\ar[d] & d\beta_0(\cT^0_X) \ar[r]\ar[d] & 0\\
{\hspace{-10mm}\cC^\bullet : \hspace{5mm}} 0 \ar[r] &\cT^0_X
\ar[r]^{d\beta_0}\ar[d] & \beta^*_0 \cT^0_Y \ar[r]\ar[d] & 0 \\
{\hspace{-10mm}{\cC''}^\bullet : \hspace{5mm}}0\ar [r] & 0 \ar[r]\ar[d] &
\beta^*_0\cT^0_Y/ d\beta_0(\cT^0_X) \ar[r]\ar[d] & 0\\
& 0 & 0 & }
$$
Since $d\beta_0$ is injective, the cohomology of the complex ${\cC'}^\bullet$ vanishes. So for all $q$,
$$
{\mathbb
H}^q(\cC^\bullet) = {\mathbb
H}^q({\cC''}^\bullet)  =  H^{q-1}(X,  \beta^*_0\cT^0_Y/
d\beta_0(\cT^0_X) )  ,
$$
where $\mathbb H$ denotes the hypercohomology. Moreover, the following holds:

\begin{proposition}
The hypercohomology ${\mathbb H}^\bullet(\cC^\bullet)$ can be
identified with the tangent cohomology $T^\bullet(X/Y)$. In
particular, the tangent space to the Hurwitz space at $\beta_0$
is identified with ${\mathbb H}^1(\cC^\bullet)$.
\end{proposition}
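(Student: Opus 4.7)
My plan is to compute $\mathbb{H}^\bullet(\cC^\bullet)$ via ordinary sheaf cohomology, to match it term-by-term with the values of $T^\bullet(X/Y)$ from Lemma~\ref{le:tanco}, and then to upgrade the resulting abstract isomorphism to a canonical identification at the level of deformation problems.

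For the computation I would feed the short exact sequence $0 \to {\cC'}^\bullet \to \cC^\bullet \to {\cC''}^\bullet \to 0$ already set up in the excerpt into the long exact sequence of hypercohomology. Because $d\beta_0$ is injective as a morphism of sheaves, ${\cC'}^\bullet$ is acyclic, so $\mathbb{H}^q(\cC^\bullet) \cong \mathbb{H}^q({\cC''}^\bullet)$. The complex ${\cC''}^\bullet$ is concentrated in degree one with value $\beta_0^*\cT^0_Y / d\beta_0(\cT^0_X) \cong \cT^1_{X/Y}$ by Lemma~\ref{le:tanshe}, hence $\mathbb{H}^q(\cC^\bullet) = H^{q-1}(X, \cT^1_{X/Y})$. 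Since the same lemma exhibits $\cT^1_{X/Y}$ as a skyscraper sheaf supported at the ramification points, $H^1(X, \cT^1_{X/Y}) = 0$, and one obtains $\mathbb{H}^0 = 0$, $\mathbb{H}^1 = H^0(X, \cT^1_{X/Y})$, $\mathbb{H}^2 = 0$. Comparison with Lemma~\ref{le:tanco} yields $\mathbb{H}^q(\cC^\bullet) \cong T^q(X/Y)$ in every degree as an abstract $\C$-vector space.

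To make the identification canonical I would interpret a \v{C}ech hypercocycle of $\cC^\bullet$ on a sufficiently fine open cover $\{U_i\}$ of $X$ as a pair $(\{\xi_{ij}\}, \{\eta_i\})$, with $\xi_{ij} \in \Gamma(U_i \cap U_j, \cT^0_X)$ a cocycle and $\eta_i \in \Gamma(U_i, \beta_0^*\cT^0_Y)$ satisfying $d\beta_0(\xi_{ij}) = \eta_j - \eta_i$. The $\xi_{ij}$ prescribe how to glue the trivial infinitesimal families $U_i \times D$ via $\mathrm{id} + \epsilon\,\xi_{ij}$ into a flat deformation $\cX \to D$ of $X$, while $\eta_i$ gives the local perturbation $\beta_0|_{U_i} + \epsilon\,\eta_i$ of the map to $Y$; the cocycle relation is exactly the compatibility required for these local maps to patch into a global morphism $\cX \to Y \times D$ fitting in the diagram~\eqref{eq:def1}. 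This sets up a natural bijection between $\mathbb{H}^1(\cC^\bullet)$ and isomorphism classes of infinitesimal deformations of $\beta_0 : X \to Y$, which is $T^1(X/Y)$ by definition; the analogous translation handles degree zero (automorphisms of the trivial deformation) and degree two (obstructions).

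The main obstacle is precisely this last step: the vector-space isomorphism is essentially a free consequence of the preceding lemmas, but matching $\mathbb{H}^\bullet(\cC^\bullet)$ with Palamodov's $T^\bullet(X/Y)$ as deformation-theoretic functors requires a careful \v{C}ech-versus-algebra dictionary between hypercocycles of $\cC^\bullet$ and equivalence classes of analytic algebra extensions. An efficient alternative is to invoke the standard quasi-isomorphism between the truncated relative cotangent complex of $\beta_0$ (valid since $\beta_0$ is a local complete intersection morphism) and $[\beta_0^*\Omega_Y \to \Omega_X]$, whose dual is exactly $\cC^\bullet$; this gives the identification in all degrees in one stroke.
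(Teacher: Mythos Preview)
Your proposal is correct. The paper itself offers no formal proof of this proposition: it records the computation $\mathbb{H}^q(\cC^\bullet)=H^{q-1}(X,\beta_0^*\cT^0_Y/d\beta_0(\cT^0_X))$ just before the statement, cites the general framework of \cite{RW}, and then simply asserts the identification. Your first step reproduces exactly that computation (together with Lemma~\ref{le:tanshe} and Lemma~\ref{le:tanco}), so at the level of what is actually written you are following the paper. Your \v{C}ech-hypercocycle interpretation and the cotangent-complex remark go beyond what the paper provides and supply the canonical (rather than merely numerical) identification that the paper leaves implicit; this is a genuine addition, and the cotangent-complex argument in particular is the cleanest way to see the statement hold functorially in all degrees.
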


\section{\wp structure}\label{se:WP}

\subsection{The pairing}\label{ss:pairing}
As before, take $\beta_0 :  X \to  Y$, with $p(X)>1$. We consider smooth base spaces $0 \in S \subset \C^r$ and deformations over $S$ given by maps $(\beta, f) : \cX \to Y\times S$ together with an isomorphism from $X$ to the distinguished fiber $f^{-1}(0)$ that takes $\beta_0$ to $\beta\vert_{f^{-1}(0)}$. For any $s\in S$, the fiber $f^{-1}(s)$ will be denoted by $\cX_s$. Let $z$ be a local holomorphic coordinate on the fibers, and $s = (s^1,\cdots,s^r) \in  \C^r$. We now apply the construction
of a natural hermitian metric on $\cK_{\cX/S}$ from \cite[Section~4]{sch}.

We equip the fibers $\cX_s$ with the unique hyperbolic metrics
$$
\omega_{\cX_s}= g(z,s) \ii dz \wedge d\ol{z}
$$
of constant Ricci curvature $-1$, which are of class \cinf\ and depend in a \cinf\ way upon the parameters.

The curvature condition reads
\begin{equation}\label{eq:hypfib}
({\pt^2}\!/{\pt z \ol{\pt z}}) \log g(z,s)  =  g(z,s)  .
\end{equation}
(Since we are applying the notions of \ka geometry, we chose this normalization.) The \ka forms on the fibers of $f$ induce a hermitian metric $g^{-1}(z,s)$ on the relative canonical line bundle $\cK_{\cX/S}$. We denote its negative (real) curvature
form by
$$
\omega_\cX  =   \ddb\log g(z,s)  .
$$
It follows from \eqref{eq:hypfib} that
$$
\omega_\cX|\cX_s =  \omega_{\cX_s}
$$
for all $s \in  S$. We have the following special case of \cite[Theorem~1]{sch}:

\begin{theorem}\label{th:pos}
Let $f : \cX \to  S$ be an effectively parameterized family of curves of genus larger than one. Then $\omega_\cX$ is a \ka
form on $\cX$.
\end{theorem}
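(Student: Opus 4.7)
\emph{Plan.} The form $\omega_\cX=\ddb\log g$ is automatically smooth, real and $d$-closed, so the only issue is positive definiteness. Writing $\phi:=\log g(z,s)$, the normalization \eqref{eq:hypfib} is $\phi_{z\bar z}=g$, which immediately gives $\omega_\cX|_{\cX_s}=g\,\ii\,dz\we d\bar z=\omega_{\cX_s}$. Hence $\omega_\cX$ is positive on vertical tangent vectors for free, and the problem reduces to controlling the horizontal and mixed directions.

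\emph{Horizontal lifts and the Kodaira-Spencer class.} At $p\in\cX$ with $s=f(p)$, I would use the $\omega_\cX$-orthogonal horizontal lift $v_\alpha=\partial_\alpha+a_\alpha^z\partial_z$ of $\partial/\partial s^\alpha$. Imposing $\omega_\cX(v_\alpha,\partial_{\bar z})=0$ determines $a_\alpha^z=-\phi_{\alpha\bar z}/g$, after which the matrix of $\omega_\cX$ in the frame $(\partial_z,v_1,\dots,v_r)$ is block-diagonal with fiber block $g$ and base block
$$h_{\alpha\bar\beta}\;=\;\phi_{\alpha\bar\beta}\,-\,\frac{\phi_{\alpha\bar z}\,\phi_{z\bar\beta}}{g}.$$
A short calculation that differentiates $\phi_{z\bar z}=g$ in $s^\alpha$ and $\bar z$ shows that the $T_{\cX_s}$-valued $(0,1)$-form $A_\alpha:=\partial_{\bar z}(a_\alpha^z)\,\partial_z\otimes d\bar z$ is harmonic with respect to $\omega_{\cX_s}$, so it is the distinguished hyperbolic-harmonic representative of the Kodaira-Spencer class $\rho_s(\partial/\partial s^\alpha)\in H^1(\cX_s,\cT^0_{\cX_s})$. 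Effective parametrization then translates into the assertion that the $A_\alpha$ are linearly independent on the fiber whenever the $\partial/\partial s^\alpha$ are linearly independent at $s$.

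\emph{Pointwise positivity of the base block.} The essential step, and the genuine obstacle, is to show that $h_{\alpha\bar\beta}$ is pointwise positive definite. Following the strategy of \cite[Theorem~1]{sch}, I would derive the pointwise identity
$$h_{\alpha\bar\beta}\;=\;g\cdot(\Box+\mathrm{id})^{-1}\bigl(\langle A_\alpha,A_\beta\rangle_{\omega_{\cX_s}}\bigr),$$
by substituting $a_\alpha^z=-\phi_{\alpha\bar z}/g$ into the formula for $h_{\alpha\bar\beta}$ and repeatedly using the hyperbolic equation $\phi_{z\bar z}=g$ to convert the resulting derivatives into a Bochner-type expression; here $\Box\ge 0$ is the non-negative $\bar\partial$-Laplacian on functions on $\cX_s$. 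Since $\Box+\mathrm{id}$ is a strictly positive self-adjoint elliptic operator, its Green's function is everywhere strictly positive, so the right-hand side is pointwise non-negative and is strictly positive at $p$ whenever some $A_\alpha$ is not identically zero on $\cX_{f(p)}$. Combined with the vertical positivity and effective parametrization, this proves that $\omega_\cX$ is a K\"ahler form on all of $\cX$. The only delicate point is verifying the displayed pointwise identity: it is where the Ricci-negative normalization is used in an essential way, and where the simplicity of the one-dimensional fiber makes the Bochner computation tractable.
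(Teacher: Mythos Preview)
The paper does not give its own proof of this theorem: it is simply quoted as a special case of \cite[Theorem~1]{sch}, and the key ingredients you isolate (the horizontal lift $v_s$, the function $\varphi=h_{s\bar s}$, the elliptic identity $(\Box_{\omega_s}+1)\varphi=\|\mu_s\|^2$, and the strict positivity of $(\Box_{\omega_s}+1)^{-1}$) reappear verbatim a few lines later in the paper as \eqref{zy}, \eqref{eq:ell}, and the reference to \cite[Proposition~3]{sch}. Your reconstruction is thus exactly the intended argument; the only slip is the spurious factor of $g$ in your displayed identity---the correct relation is $h_{\alpha\bar\beta}=(\Box+1)^{-1}\langle A_\alpha,A_\beta\rangle_{\omega_{\cX_s}}$, as in \eqref{eq:ell}---but this does not affect the positivity conclusion since $g>0$.
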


Next, we fix a metric $\omega_Y =  h(w)\cdot\ii dw \wedge \ol{dw}$ on $Y$ of constant Ricci curvature $\epsilon = 0$ or $\epsilon = \pm 1$ depending on the genus of $Y$. If $p(Y)  = 1$, we normalize the flat metric $\omega_Y$ by imposing the condition that the total volume is $1$. In the latter case $\omega_Y$ is the curvature form of a positive bundle $(\cL,k)$ on $Y$ that represents the principal polarization. Else $\omega_Y$ is the curvature form of a unique hermitian metric on $\cK_Y^{-1}$ or $ \cK_Y$
depending on whether $p(Y)$ is zero or greater than one.

The metric $h$ on $Y$ induces the hermitian metric $\beta_0^*h$ on the line bundle $\beta_0^*\cT^0_Y$. The symbol $s$ as an index of a tensor stands for one of the coordinate functions $s^j$ on $S$; more generally, $\pt/\pt s = \pt_s$ will stand for an arbitrary (non-zero) tangent vector on $S$.

\medskip
\noindent {\bf Notation.}   Let $(\beta,f) : \cX  \to Y \times S$ be a deformation of $X$ over $Y$ over $S$ in the sense above. In terms of the local coordinate system $(z,s^1,\cdots,s^r)$ on $\mathcal X$, we denote the partial derivatives of $g$ by $g_{z\ol z}$, $g_{i,\ol z}$, $g_{i\ol\jmath}$ etc.; if we restrict ourselves to a $1$-dimensional base space, we denote the components simply by $g_{s,\ol z}$, $g_{s\ol s}$ etc. We denote by $w$ local coordinates, on $Y$, such that $f(z,s)=s$ and $\beta(z,s)=w$,
with $\beta_0(z)=\beta(z,0)$ on $X$.

The derivative of $\beta$ that maps the tangent space of $\cX$ at a point $(z,s)$ to the tangent space of $Y$ at $w=\beta(z,s)$ induces differential forms with values in the holomorphic vector fields  along $\beta$.
\begin{eqnarray} \label{eq:zeta}
  d \beta_{rel}&:=& \zeta:= \zeta^w_z \left.\frac{\pt}{\pt w}\right|_{\beta(z,s)}\!\!\! dz \; \text{ with } \quad
   \zeta^w_z  =  \frac{\pt\beta(z,s)}{\pt z}\; ,
   \\ \label{eq:xi}
   d \beta&=&\left( \zeta^w_z dz+ \xi^w_s ds\right) \left.\frac{\pt}{\pt w}\right|_{\beta(z,s)}\; \text{ with } \quad
   \xi^w_s=  \frac{\pt\beta(z,s)}{\pt s} \; .
\end{eqnarray}
\medskip

We have the differential
$$
\zeta=d\beta_{rel} \in H^0(\cX, (\cT^0_{\cX/S})^{-1}\! \otimes \beta^*\cT^0_Y)= H^0(\cX, \Omega_{\cX/S}(\beta^*(\cT^0_Y))  .
$$
Let $\cB\subset \cX$ be the ramification divisor of $\beta$. The above section $\zeta$ identifies $(\cT^0_{\cX/S})^{-1}\otimes \beta^*\cT^0_Y$ with $\cO_\cX(\cB)$; this identification takes $\zeta$ to the constant function $1 \in  H^0(X, \cO_\cX(\cB))$.

We will briefly recall a description of the {\em classical \wp form}, on the Teich\-m\"ul\-ler/mo\-du\-li space of Riemann surfaces of genus larger that one, in terms of canonical (\cite{siu:canlift}) and horizontal lifts (cf.\ \cite{sch,a-s}). Let
$$
\rho_s  :  T_sS  \to  H^1(\cX_s,
\cT^0_{\cX_s})
$$
be the \ks map associated to the deformation $\nu(\psi)$ (see \eqref{m1}). The harmonic representatives of $\rho_s (\pt_s)$ with respect to the hyperbolic metric on $\cX_s$ are harmonic Beltrami differentials, which we
denote by $\mu_s = \mu^z_{s\ol z}\pt_z \ol{dz}$.

The harmonic Beltrami differentials are closely related to the variation of the metric tensor in a family. We denote by
\begin{eqnarray*}
  v_s= \pt_s + a^z \pt_z \quad \text{where} \quad  a^z_s= -g^{\ol z z}g_{s\ol z}
\end{eqnarray*}
the {\em horizontal} lift of a tangent vector $\pt_s$ of $S$ that is orthogonal to the fiber with respect to $\omega_\cX$ (cf.\ \cite[p.\ 247, (6), (7)]{a-s}). Now the harmonic Beltrami
differential is
$$
\mu_s =  (\ol\pt v)\vert\cX_s  =  \pt_{\ol z}(a^z_s)\pt_z \ol{dz}  .
$$
The canonical $L^2$ inner product of harmonic Beltrami differentials with respect to the hyperbolic metric on the fibers is known as the \wp inner product.

This construction requires a modification for the Hurwitz spaces.

To construct a generalized \wp metric on a Hurwitz space, we again use an approach from higher dimensional theory, namely canonical lifts of tangent vectors introduced by Siu \cite{siu:canlift} that were used in \cite{f-s:extremal} in the form of horizontal lifts. We follow
\cite[Proposition~3]{sch}.

Again, once we are dealing with a hermitian inner product on the tangent space of the base, which is functorial, i.e.\ compatible with base change, we only need to compute the norms of tangent vectors. This means that it is sufficient to look at base spaces of embedding dimension one. Since all deformations are unobstructed,  we may assume without loss of generality that $S$ is a disk in $\C$. In this way
the notation can be simplified.

The tangent space of the Hurwitz space will be treated in terms of the exact sequence from
Proposition~\ref{le1}.

The first step is to construct a semi-definite inner product $G^0_{WP}$ on the image of $T^1(X/Y)=H^0(X, \cT^1_{X/Y})$ in $H^1(X, \cT^0_X)$ that is {\em not} the usual \wp inner product on the image $\nu(\rho_s(\pt_s))$ in $H^1(X,\cT^0_X)$ given in Proposition~\ref{le1}.

Let
\begin{equation}\label{zy}
\varphi := \varphi_{s\ol s} := \langle v_s,v_s\rangle_{\omega_\cX} =
g_{s\ol s} - g_{s\ol z}g_{z \ol s}g^{\ol z z}
\end{equation}
be the pointwise norm of the horizontal lift of $v_s$. Then
$$
\omega^2_\cX  = \omega_\cX \wedge  (\varphi   \ii ds \wedge \ol{ds})  .
$$
The right-hand side depends only on the induced relative \ka form $\omega_{\cX/S}$. Let
$$
\Box_{\omega_s}  =  \Box_{\omega_{\cX_s}}
$$
be the fiberwise Laplacian on differentiable functions for the relative \ka form $\omega_{\cX/S}$. Then the pointwise norm of the induced harmonic Beltrami differential satisfies the
equation
\begin{equation}\label{eq:ell}
\Box_{\omega_s} \varphi + \varphi  =   \|\mu_s\|^2(z,s)  .
\end{equation}

The operator $\Box_{\omega_s} + 1$ is strictly positive.  If $\mu_s \neq  0$, then the function $\varphi$ is everywhere positive by \cite[Proposition 3]{sch}. We define
\begin{equation}\label{ewp}
G^{WP}_{0,s\ol s}(s) :=  \int_{\cX_s}
(\Box_{\omega_s}+1)^{-1}(\|\mu_s\|^2(z,s))   \beta_s^*\omega_Y  ,
\end{equation}
where $\beta_s = \beta\vert\cX_s$. This construction is functorial, since it depends upon Beltrami differentials that are harmonic with respect to the hyperbolic metric on the fibers and the fixed metric on
the base space $Y$. So \eqref{ewp} defines a real $(1,1)$-form on the Hurwitz space.

Because of \eqref{eq:ell} we have
\begin{equation}\label{eq:wp1}
G^{WP}_{0,s\ol s}(s) =  \int_{\cX_s} \varphi(z,s)   \zeta(z,s) \ol \zeta(z,s)
h(\beta(z,s)) \ii dz \we\ol{dz}  .
\end{equation}

In the second step we construct an inner product $G^{WP}_1$ regarding the kernel of $\nu$ in Proposition~\ref{le1}. It has to take into account the case where the fiber $\cX_s$ is fixed. A {\em holomorphic lift} of a  tangent vector $\pt_s$ with values in
$\beta^*\cT^0_Y$ is given by
$$
u_s  =  \pt_s - \xi^w_s(z,s) \pt_w|_{\beta(z,s)}  .
$$
The horizontal lift $v_s$ induces a lift with values in
$\beta^*\cT^0_Y$:
$$
\wt v_s =  \pt_s + a^z_s(z,s)  \zeta^w_z(z,s) \pt_w|_{\beta(z,s)}  .
$$
At this point, we use an approach similar to \cite[(18)]{a-s1} and consider the difference of two lifts of tangent vectors of the base. The difference
$$
U_s  :=  \wt v_s - u_s
$$
is a vector field along the fibers of the map $\beta  :  \cX \to  Y$. The vector field $U_s$ can also be interpreted as the image
\begin{equation}\label{eq:us}
U_s = \beta_{*}(v_s)|\cX_s.
\end{equation}
Namely, using \eqref{eq:xi} and
\eqref{eq:zeta} we get
\begin{equation}\label{eq:us1}
U_s= (a^z_s \zeta + \xi_s) \pt_w|_{\beta(z,s)}.
\end{equation}
Its pointwise norm $\|U_s\|^2(z,s)$ is given in terms of the metric $h$ on $Y$:
\begin{equation}\label{eq:wp2}
\|U_s\|^2(z,s) =   (a^z_s\zeta^w_z + \xi^w_s) (a^\ol z_\ol s \zeta^\ol w_\ol s
+ \xi^\ol w_\ol z)h(\beta(z,s))  .
\end{equation}
We integrate the function $\|U_s\|^2(z,s)$ over the fiber $\cX_s$ with respect to the area element $g\, dA$
of the hyperbolic metric on $\cX_s$. Define
\begin{equation}\label{ewp2}
G^{WP}_{1,s\ol s}(s)   =   \int_{\cX_s} \| U_s\|^2(z,s) \,  g \,  dA  .
\end{equation}
Again the result is intrinsically defined and compatible with base change. It is easy to see that the norms $G^{WP}_1$ and $G^{WP}_0$ give rise to a hermitian product.

\begin{definition}\label{de:pwpr}
The \wp inner product on the tangent space $T^1(\cX_s/Y)=H^0(\cX_s,\cT^1_{\cX_s/Y})$ of the Hurwitz space is defined by its norm
\begin{equation}\label{zy2}
\|\pt_s \|^2_{WP}= G^{WP}_{s\ol s}(s) :=  G^{WP}_{1,s\ol s}(s)+G^{WP}_{0,s\ol s}(s)  ,
\end{equation}
where $G^{WP}_{0,s\ol s}(s)$ and $G^{WP}_{1,s\ol s}(s)$ are defined in \eqref{ewp} and \eqref{ewp2} respectively.
\end{definition}

\subsection{Positivity of the pairing}

\begin{proposition}
The \wp inner product is positive definite if the family
$$
(\beta,f)  :  \cX  \to  Y\times S
$$
is effectively parameterized.
\end{proposition}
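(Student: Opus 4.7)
The plan is to take a nonzero tangent vector $\pt_s\in T_sS$ and show $G^{WP}_{s\ol s}(s)>0$. By Definition~\ref{de:pwpr} this equals the sum $G^{WP}_{0,s\ol s}+G^{WP}_{1,s\ol s}$ of two manifestly non-negative integrals, so it suffices to show that at least one of them is strictly positive. Effective parameterization guarantees that the image of $\pt_s$ in $T^1(X/Y)=H^0(X,\cT^1_{X/Y})$ is nonzero, and I split into two cases according to whether the connecting map $\nu$ of Proposition~\ref{le1} annihilates this image.

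\textbf{Case 1:} $\nu(\pt_s)\neq 0$ in $H^1(X,\cT^0_X)$. Then the harmonic representative $\mu_s$ of this class is not identically zero, so by \cite[Proposition~3]{sch} the solution $\varphi$ of the elliptic equation~\eqref{eq:ell} is strictly positive at every point of $\cX_s$. Since $\beta_s^*\omega_Y=|\zeta^w_z|^2 h\,\ii dz\we\ol{dz}$ is strictly positive off the finite ramification set, the expression \eqref{eq:wp1} immediately yields $G^{WP}_{0,s\ol s}(s)>0$.

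\textbf{Case 2:} $\nu(\pt_s)=0$. Then $\pt_s$ lifts to a \emph{nonzero} element of $\ker\nu=H^0(X,\beta_0^*\cT^0_Y)$, and simultaneously $\mu_s\equiv 0$, so $\pt_{\ol z}a^z_s\equiv 0$ along $\cX_0$. A direct computation from \eqref{eq:zeta}--\eqref{eq:xi}, using holomorphy of $\beta$, gives $\pt_{\ol z}(a^z_s\zeta^w_z+\xi^w_s)=(\pt_{\ol z}a^z_s)\zeta^w_z=0$ on $\cX_0$, so by \eqref{eq:us1} the field $U_s=\beta_*(v_s)|\cX_0$ is a holomorphic section of $\beta_0^*\cT^0_Y$. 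The essential claim is that this holomorphic $U_s$ coincides with the nonzero lift of $\pt_s$ in $H^0(X,\beta_0^*\cT^0_Y)$ furnished by Proposition~\ref{le1}: when $\mu_s\equiv 0$ the horizontal lift $v_s$ provides a first-order holomorphic trivialization of $\cX$ over $S$ near $\cX_0$, in which the family $\beta$ becomes a one-parameter deformation $\{\beta_\tau\}$ of $\beta_0:X\to Y$ whose derivative $\pt_\tau\beta_\tau|_{\tau=0}$ is exactly $U_s$; tracing this through the short exact sequence of Lemma~\ref{le:tanshe} identifies $U_s$ with the cohomological lift. Hence $U_s\not\equiv 0$, and since a nonzero holomorphic section is nonzero on a dense open set, \eqref{ewp2} yields $G^{WP}_{1,s\ol s}(s)>0$.

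The principal technical obstacle I anticipate is the identification in Case~2: rigorously matching the analytic object $U_s=\beta_*(v_s)|\cX_0$ with the algebraic lift of $\pt_s$ to $H^0(X,\beta_0^*\cT^0_Y)$ produced by the long exact tangent-cohomology sequence. Once this correspondence is established, positivity reduces in either case to the elementary fact that the integral of a non-negative continuous quantity which is strictly positive on a dense open subset of $\cX_s$ is itself strictly positive.
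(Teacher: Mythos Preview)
Your proposal is correct and follows the same underlying idea as the paper: both summands $G^{WP}_{0}$ and $G^{WP}_{1}$ are non-negative, and their simultaneous vanishing forces the infinitesimal deformation to be trivial. The difference is organizational. You argue forward via a case split on whether $\nu(\pt_s)$ vanishes, while the paper argues by contrapositive: it assumes $G^{WP}_{s\ol s}(s_0)=0$, deduces $\mu_s\equiv 0$ from $G^{WP}_0=0$, and then, rather than invoking the cohomological identification you single out as an obstacle, simply chooses adapted holomorphic coordinates in which the (now holomorphic) horizontal lift reads $v_s=\pt_s$, so $a^z_s=0$. In these coordinates $G^{WP}_1=0$ gives $\xi^w_s=0$ directly, and one sees by inspection that the family is first-order constant. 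This coordinate shortcut replaces your identification of $U_s$ with the lift in $H^0(X,\beta_0^*\cT^0_Y)$; both arguments land in the same place, but the paper's version sidesteps the step you correctly flagged as the most delicate.
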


\begin{proof}
We follow the notation of this section. We prove the property at a point $s_0  \in  S$. Assume that for a tangent vector $\pt_s \in  T_{S,s_0}$, the value $G^{WP}_{s\ol s}(s_0)$ in \eqref{zy2} vanishes. So both
$G^{WP}_{0,s\ol s}$ and $G^{WP}_{1,s\ol s}$ are zero at the distinguished point.

We consider the former term. Since the operator $(\Box_{\omega_s}+1)^{-1}$ is strictly positive, it follows from \eqref{ewp} that the harmonic Beltrami differential $\mu_s$ is equal to zero (as well as the function
$\varphi_{s\ol s}$). Therefore, the horizontal lift
$$
v_s = \pt_s + a^z_s \pt_z
$$
is holomorphic on the first infinitesimal neighborhood of $\cX_{s_0}$. As the construction is independent of the choice of coordinates, we can pick local holomorphic coordinates on the total space $\cX$ such
that $v_s= \pt_s$ at $s_0$. Now $a^z_s=0$ and $\varphi_{s\ol s}=0$ on the given fiber.

Next, we consider $G^{WP}_{1,s\ol s}(s_0)  = 0$. Since $a^z_s=0$, we have
$$
0 = \|U_s \|^2(z,s_0)  =  \xi^w_s \xi^{\ol w}_{\ol s} h(\beta(z,s_0)),
$$
implying that the infinitesimal deformation of $\beta_0 :  \cX_{s_0} \to  Y$ satisfies the equation
$$
\xi^w_s(z,s_0)\pt_w|_{\beta(z,s_0)} =  0  .
$$
In the chosen coordinates $u_s=\pt_s$ holds at $s_0$, which means that also the induced deformation of $\beta_0$ is infinitesimally trivial.
\end{proof}

\section{\ka property of the \wp structure}

The \wp inner product on the base of a family of compact Riemann surfaces together with simple branched covering maps to a fixed Riemann surface defines a \wp form $\omega^{WP}$ on the base as follows: If $s^1\! ,\cdots,s^r$ are local holomorphic
coordinates on the base, then
$$
\omega^{WP} =  \ii\cdot G^{WP}_{i \ol \jmath} ds^i \wedge ds^{\ol \jmath}  ,
$$
where $G^{WP}_{i \ol \jmath}(s) = G^{WP}(\pt/\pt s_i|_s, \pt/\pt s_j|_s)$.

\begin{theorem}\label{th:fibint}
Let $(\beta,f) : \cX \to  Y \times S$ be a holomorphic family. Let
$$
\omega_\cX = \ii   \partial\overline{\partial} \log g(x,s)
$$
be the curvature form of $\cK_{\cX/S}$ equipped with the metric that is induced by the hyperbolic metric on the fibers of $f$. Let $\omega_Y$ be the earlier defined metric on $Y$ of constant Ricci curvature equal to $\epsilon =0  \text{ or }\pm 1$. Then
the \wp form satisfies
\begin{equation}\label{eq:fibint}
\omega^{WP} =  \int_{\cX/S} \omega_\cX \wedge \beta^*\omega_Y  .
\end{equation}
In particular, the \wp form is K\"ah\-ler.
\end{theorem}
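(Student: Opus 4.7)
The plan is to verify the identity pointwise on $S$ by a direct coordinate calculation, and then deduce the \ka property for free from the fact that the right-hand side is manifestly $d$-closed. Since both sides are functorial under base change and hermitian, it suffices to evaluate on a one-dimensional disk $S$, which matches the setup adopted in Section~\ref{ss:pairing}. In the coordinates $(z,s)$ on $\cX$ together with $w$ on $Y$, we have
$$
\omega_\cX = \ii\bigl(g\,dz\wedge\ol{dz} + g_{s\ol z}\,ds\wedge\ol{dz}+g_{z\ol s}\,dz\wedge\ol{ds}+g_{s\ol s}\,ds\wedge\ol{ds}\bigr),
$$
and, using \eqref{eq:zeta}--\eqref{eq:xi},
$$
\beta^*\omega_Y=\ii\, h(\beta)\bigl(|\zeta|^2 dz\wedge\ol{dz}+\zeta\ol{\xi}\,dz\wedge\ol{ds}+\xi\ol{\zeta}\,ds\wedge\ol{dz}+|\xi|^2 ds\wedge\ol{ds}\bigr).
$$

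First I would expand $\omega_\cX\we\beta^*\omega_Y$ as a $(2,2)$-form and collect the coefficient of the top form $(\ii dz\we\ol{dz})\we(\ii ds\we\ol{ds})$. A routine Leibniz count of the four surviving monomials (the analogue of the $2\times 2$ determinant expansion used for $\omega_\cX^2$ leading to \eqref{zy}) produces
$$
h\bigl(g|\xi|^2+g_{s\ol s}|\zeta|^2-g_{s\ol z}\zeta\ol{\xi}-g_{z\ol s}\ol{\zeta}\xi\bigr).
$$
After fiber integration this contributes the coefficient of $\ii ds\we\ol{ds}$ in $\int_{\cX/S}\omega_\cX\we\beta^*\omega_Y$.

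Next I would recompute $G^{WP}_{0,s\ol s}$ and $G^{WP}_{1,s\ol s}$ in the same coordinates. Substituting the identity $a^z_s=-g_{s\ol z}/g$ (valid on each fiber) into \eqref{eq:wp2} and \eqref{ewp2} gives
$$
G^{WP}_{1,s\ol s}=\int_{\cX_s} h\Bigl(\frac{g_{s\ol z}g_{z\ol s}}{g}|\zeta|^2-g_{s\ol z}\zeta\ol{\xi}-g_{z\ol s}\ol{\zeta}\xi+g|\xi|^2\Bigr)\ii\,dz\we\ol{dz},
$$
while \eqref{eq:wp1} together with the definition $\varphi=g_{s\ol s}-g_{s\ol z}g_{z\ol s}/g$ from \eqref{zy} gives
$$
G^{WP}_{0,s\ol s}=\int_{\cX_s} h|\zeta|^2\Bigl(g_{s\ol s}-\frac{g_{s\ol z}g_{z\ol s}}{g}\Bigr)\ii\,dz\we\ol{dz}.
$$
The crucial $\varphi$-correction appearing in $G^{WP}_0$ is precisely what cancels the $g_{s\ol z}g_{z\ol s}|\zeta|^2/g$ produced by squaring the horizontal lift in $G^{WP}_1$. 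Adding the two expressions reproduces exactly the fiber-integrated coefficient computed in the first step, which proves \eqref{eq:fibint}.

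The main (modest) obstacle is purely bookkeeping: keeping track of the signs in the wedge expansion and matching the two different definitions of the horizontal lift---one $\cX$-intrinsic (through $v_s$) and the other taking values in $\beta^*\cT^0_Y$ (through $\wt v_s$ and $u_s$)---so that the cross terms in $|U_s|^2$ assemble correctly with the $\varphi$-term. Once \eqref{eq:fibint} is established, the \ka property of $\omega^{WP}$ is immediate: $\omega_\cX$ and $\beta^*\omega_Y$ are both real closed $(1,1)$-forms, hence so is their wedge, and fiber integration over the proper submersion $f$ commutes with $d$ and preserves type, so $\omega^{WP}$ is a real closed $(1,1)$-form whose positivity has already been established in the preceding proposition.
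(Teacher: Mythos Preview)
Your proposal is correct and follows essentially the same route as the paper's own proof: reduce to a one-dimensional base by functoriality, expand $\omega_\cX\wedge\beta^*\omega_Y$ in the coordinates $(z,s)$, and verify that the resulting integrand equals $h\bigl(\varphi|\zeta|^2 + g\,|a^z_s\zeta+\xi|^2\bigr)$, which by \eqref{eq:wp1} and \eqref{eq:wp2} is exactly $G^{WP}_{0,s\ol s}+G^{WP}_{1,s\ol s}$. The only cosmetic difference is that the paper rewrites the wedge expansion directly into the form $\varphi\zeta\ol\zeta + (a^z_s\zeta+\xi_s)(a^{\ol z}_{\ol s}\ol\zeta+\ol\xi_{\ol s})g_{z\ol z}$ and then cites \eqref{eq:wp1}, \eqref{eq:wp2}, whereas you expand $G^{WP}_0$ and $G^{WP}_1$ separately via $a^z_s=-g_{s\ol z}/g$ and add; the underlying algebraic identity is the same.
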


\begin{proof}
The expression on the right hand side of \eqref{eq:fibint} is clearly functorial so that it is enough to prove the equation for one dimensional base spaces $S = \{s\} \subset \C$. Again, we use the letter $s$ also as a subscript to identify the corresponding index of a tensor.

We compute the integrand in \eqref{eq:fibint}:
\begin{small}
\begin{gather*}
\omega_\cX \wedge \beta^*\omega_Y = \ii \left(g_{s\ol s} ds \we \ol{ds} + g_{s\ol z} ds \we \ol{dz} + g_{z\ol
s} dz \we \ol{ds} + g_{z \ol z}dz \we \ol{dz}\right)\\
\strut \hspace{5cm}  \we\ii (h\circ \beta)\left(\zeta dz + \xi_s ds\right)\we\left(\ol \zeta \ol{dz} + \ol
\xi_{\ol s} \ol{ds}
\right)\\
 = \left( g_{s\ol s} \zeta\ol
\zeta - g_{s\ol z} \zeta \ol \xi_s - g_{z\ol s} \xi_s \ol \zeta
+ g_{z\ol z}\xi_s \ol \xi_{\ol s} \right)
(h\circ\beta) \cdot (\ii)^2 dz \we \ol{dz}\we ds \we \ol{ds}\\
= \left(\varphi \zeta\ol\zeta +
(a^z_s\zeta+\xi_s)(a^{\ol z}_{\ol s}\ol \zeta+ \ol\xi_{\ol
s})g_{z\ol z} \right)(h\circ\beta)
\cdot (\ii)^2 dz \we \ol{dz}\we ds \we \ol{ds}
\end{gather*}
\end{small}
We use \eqref{eq:wp1} and \eqref{eq:wp2} which yield \eqref{eq:fibint}.
\end{proof}

\section{Generalized Hurwitz spaces}\label{se:genHu}
In this section we describe an approach to generalized Hurwitz spaces that is related to our construction of the generalized \wp form. For the existence of generalized Hurwitz stacks refer to the article of Harris, Graber, and Starr \cite{hgs}. We consider simple branched coverings $\beta :  X \to Y$, where the Riemann surface $Y$ and the integers $p  = p(X) >  1$ and $\text{degree}(\beta)$ are fixed. The number of branch points will be denoted by $b$ (note that it is fixed). Two such objects $\beta : X  \to  Y$ and $\beta': X'\to Y$ are \textit{isomorphic}, if there exists an isomorphism $\varphi: X \to   X'$ such that $\beta'\circ\varphi = \beta$. Recall from Lemma~\ref{le:tanco} that deformations of Riemann surfaces $X$ with $p(X) > 1$ together with a simple branched covering maps to a fixed Riemann surface $Y$ are unobstructed and that the automorphism groups of such objects
are contained in the automorphism groups of $X$ and hence they are finite.

Let $\cZ_p  \to  \cT_p$ be the universal family over the \tei space of compact Riemann surfaces of genus $p>1$. We introduce a level-$m$ structure, e.g.\ $m = 3$, (cf.\ \cite{Lo,PdJ} for details). We take the subgroup $\Gamma_p[3] \subset \Gamma_p$ of the \tei modular group that acts trivially on $H^1(X,   {\mathbb Z}/3{\mathbb Z})$ (in other words, fixes the level structure). The group $\Gamma_p[3]$ acts on the universal family in an equivariant, proper, discontinuous and free way yielding a projective
universal family
$$
\cZ  =   \cZ^{[3]}_p  \to   \cM^{[3]}_p  =  \wt\cT  .
$$
Let $Y$ be a fixed Riemann surface different from ${\mathbb P}_1$. As before, the fiber over $s \in  \wt\cT$ will be denoted by $\cZ_s$. Let $\cZ \times Y \to \wt\cT$ be the family of Cartesian products over $\wt\cT$. We equip the family $\cZ \to  \wt \cT$ with the (relative) canonical polarization; the Riemann surface $Y$ also carries a holomorphic hermitian line bundle whose curvature is the given K\"ahler
form $\omega_Y$ (up to a numerical factor) inducing a polarization of the product.

Given a simple branched covering $\beta :X \to Y$ with given number of branch points and sheets, where $X$ stands for one fiber $\cZ_s$, the graph of the covering is provided with an induced polarization and Hilbert polynomial. Let $\Hilb  = \Hilb_{\wt\cT}(\cZ\times Y)$ be the Hilbert scheme parameterizing closed subschemes with these data.  Let $\cW \hookrightarrow   (\cZ \times_{\wt\cT} \Hilb) \times Y$ be the universal subscheme. So
we have a diagram
$$
\xymatrix{\cW    \ar[dr] \ar@{^(->}[r] &
(\cZ \times_{\wt\cT} \Hilb) \times Y \ar[d]\ar[r] & \cZ \times Y \ar[d]\\
&  \Hilb \ar[r] & {\wt\cT}}
$$
Now there exists an open subvariety $\wt H \subset {\rm Hilb}$ that parameterizes the graphs of simple branched coverings of the given type. This subspace (together with the universal object) represents the functor
$$
\wt {\underline H} : ((\text{analytic spaces}/\wt\cT ))\to ((\text{sets}))
$$
that assigns to any space $S  \to  \wt\cT$ the set of all (flat, projective families of) simple coverings
$$
\cZ\times_{\wt\cT} S  \to  Y   .
$$
We denote the universal simple covering by $(\beta,f):\cZ \times_T \wt H \to  Y\times \wt H$.

\begin{proposition}
For any fixed Riemann surface $Y$, there exists a moduli stack and the induced coarse moduli space $\cH^{n,b}(Y)$ of simple branched coverings $X \to  Y$ of degree $n$ with $b$ branch points. The moduli space $\cH^{n,b}(Y)$ is a quotient of $\wt\cH$ by $\Gamma_p/\Gamma_p[3]$ that acts in a proper, discontinuous way. For every point of $\wt \cH$, the isotropy group is
contained in ${\mathbb Z}/2{\mathbb Z}$.
\end{proposition}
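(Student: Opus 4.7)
The plan is to construct the moduli stack as the quotient stack
$\bigl[\wt H \,/\, (\Gamma_p/\Gamma_p[3])\bigr]$ and the coarse moduli space $\cH^{n,b}(Y)$ as the associated geometric quotient, and then to verify the three claims about the action (existence, properness/discontinuity, and the form of isotropy) in turn.

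First I would describe the action. The mapping class group $\Gamma_p$ acts on the Teichm\"uller space $\cT_p$ and its universal family, and $\Gamma_p[3]$ is the kernel of the action on $H^1(X,\mathbb Z/3\mathbb Z)$; for $p>1$ the group $\Gamma_p[3]$ acts freely and properly discontinuously, so $\cZ\to\wt\cT=\cM_p^{[3]}$ is the quotient projective family, and the finite group $\Gamma_p/\Gamma_p[3]$ acts on $\cZ\to\wt\cT$ covering its (proper, properly discontinuous) action on $\wt\cT$. This action extends functorially to $\cZ\times Y\to\wt\cT$ (trivially on the $Y$-factor), hence to the relative Hilbert scheme $\Hilb_{\wt\cT}(\cZ\times Y)$, and it preserves the open subvariety $\wt H$ since the defining condition of being the graph of a simple branched covering of the prescribed numerical type is invariant. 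Writing down the induced action on the universal covering $(\beta,f):\cZ\times_{\wt\cT}\wt H\to Y\times\wt H$ gives the groupoid whose quotient stack is the candidate $\cH^{n,b}(Y)$, and I would verify the moduli property by pulling back: any family of simple branched coverings over $S$ canonically determines a map $S\to\wt\cT$ (after passing to a cover trivializing the level structure) together with a section of the relative Hilbert scheme landing in $\wt H$.

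Next I would establish properness and proper discontinuity. The key point is that $\wt H\to\wt\cT$ is an algebraic morphism (open in a relative Hilbert scheme) and the group acts equivariantly over $\wt\cT$. Given compact sets $K_1,K_2\subset\wt H$, their images in $\wt\cT$ are compact, so only finitely many group elements can map one image into a neighborhood of the other; among these, equivariance over $\wt\cT$ together with the fact that on each fiber of $\wt H\to\wt\cT$ the stabilizer in $\Gamma_p/\Gamma_p[3]$ of a given graph is finite (it is a subgroup of $\operatorname{Aut}(X)$, which is finite for $p>1$) reduces the question to the finite case. This gives both properness and discontinuity, and consequently the quotient $\cH^{n,b}(Y)$ exists as a normal complex analytic space that co-represents the moduli functor.

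Finally, I would identify the isotropy groups. A point of $\wt H$ over $s\in\wt\cT$ is the graph of a simple branched covering $\beta:X=\cZ_s\to Y$. An element $[\gamma]\in\Gamma_p/\Gamma_p[3]$ fixes this point iff it fixes $s$ and the induced biholomorphism $\chi:X\to X$ satisfies $\beta\circ\chi=\beta$, i.e.\ $\chi$ is a deck transformation of the branched covering $\beta$. Exactly as in the discussion of the classical Hurwitz space in the introduction, any such $\chi$ must fix every ramification point of $\beta$ and hence is an involution in a neighborhood of each ramification point; since $b>0$, forming the quotient $X/\langle\chi\rangle$ produces an unbranched covering of $Y$ that must be an isomorphism, so $\chi$ is at most an involution and this case actually occurs only for $n=2$. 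Thus the isotropy group is contained in $\mathbb Z/2\mathbb Z$, with equality possible only when $n=2$.

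The main obstacle I expect is the bookkeeping in the second step: one must check carefully that the $\Gamma_p/\Gamma_p[3]$-action on the relative Hilbert scheme covers the action on $\wt\cT$ and that the resulting quotient in the analytic category has the correct universal property; once this is in place, the properness of the action and the identification of stabilizers follow from standard arguments together with the automorphism analysis already carried out in the introduction.
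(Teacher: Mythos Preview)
Your overall strategy matches the paper's: the paper simply cites \cite{hgs} for the existence of the coarse moduli space, observes that the description via $\wt H$ identifies it with the quotient $\wt H/(\Gamma_p/\Gamma_p[3])$, and then argues the isotropy bound. Your plan fills in more of the construction and the properness/discontinuity argument than the paper chooses to write out, but the route is the same.

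There is, however, a genuine slip in your isotropy step. You reproduce the argument from the introduction essentially verbatim, but that argument was written for $Y=\mathbb{P}_1$: after quotienting by the putative automorphism you obtain an unbranched covering of $Y$, and you assert this ``must be an isomorphism''. That conclusion relies on $\mathbb{P}_1$ being simply connected; for $p(Y)\ge 1$ there exist nontrivial unramified covers of $Y$, so neither the ``isomorphism'' step nor your corollary that nontrivial isotropy forces $n=2$ survives for general $Y$. The paper's own proof avoids this detour: it notes that an automorphism of a simple branched covering fixes each ramification point and, since a deck transformation of a connected covering is determined by its germ at any point, is determined by its action near one ramification point, where the only possibilities are the identity and $z\mapsto -z$. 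This yields the $\mathbb{Z}/2\mathbb{Z}$ bound directly and works for arbitrary $Y$. Your local observation (``$\chi$ is an involution in a neighborhood of each ramification point'') is exactly the right ingredient; combine it with the determination-by-germ fact rather than the quotient argument, and drop the $n=2$ claim, which is special to $Y=\mathbb{P}_1$.
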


\begin{proof}
The existence of a coarse moduli space in the category of algebraic spaces is known (cf.\ \cite{hgs} and references therein). It follows from the above description that it coincides with the quotient $\wt H/(\Gamma_p/\Gamma_p[3])$. The automorphisms of a simple branched covering $X \to  Y$ fix the ramification points, and like in the classical case are determined by the restrictions to the neighborhood of a ramification point. Hence the automorphism group of a fiber is
contained in ${\mathbb Z}/2{\mathbb Z}$.
\end{proof}

Just as in the situation of the classical Hurwitz space, given a finite subset of $Y$ of cardinality $b$, there are, up to isomorphism, only finitely many simple branched coverings of $Y$ of given degree that are branched exactly over the given subset of $Y$. Therefore, the canonical map
$$
\cH^{n,b} \to  \text{Sym}^b(Y) \backslash \Delta, \quad
\Delta = \{(y_1,\cdots ,y_b)
\mid
y_j=y_k \text{ for some } j\neq k \}
$$
is finite.

\section{Determinant line bundle, and Quillen metric on
generalized Hurwitz spaces}\label{se:detbdl}

Let $(\cE, k)$ be any hermitian vector bundle on $\cX$. We denote the determinant line bundle of $\cE$ in the derived category by
$$
\lambda(\cE) = \det R^\bullet f_*(\cE)  .
$$
The main result of Bismut, Gillet and Soul\'e in \cite{bgs} states the existence of a Quillen metric $h^Q$ on the determinant line bundle $\lambda(\cE)$ such that the following equality holds between its Chern form and the component in degree two of a fiber integral:
\begin{equation}\label{eq:bgs}
c_1(\lambda(\cE),h^Q)  =   \left[\int_{\cX/S}\textit{td}
(\cX/S,\omega_{\cX/S})\textit{ch}(\cE,h)\right]_2   .
\end{equation}
Here $\textit{ch}$ and $\textit{td}$ stand for the Chern character form and the Todd form respectively.

The formula in \eqref{eq:bgs} is applied to a virtual bundle of degree zero (cf.\ \cite[Remark 10.1]{f-s:extremal} for the notion of virtual bundles). It follows immediately from the definition of a determinant line bundle that the determinant line bundle of a virtual vector bundle is well-defined (as a
holomorphic line bundle).

We consider a local, universal family of simple branched coverings
\begin{equation}\label{ma1}
(\beta,f) :  \cX  \to Y\times S  .
\end{equation}
There is a holomorphic hermitian line bundle $(\cL,\wt h)$ on $Y$, whose (real) curvature form is equal to $\omega_Y$, meaning
\begin{equation}\label{ma2}
2\pi \cdot c_1(\cL, \wt h) =  \omega_Y   .
\end{equation}
Namely, if $Y =  {\mathbb P}_1$ we consider the metric of constant (Ricci) curvature equal to $+1$ and set $\cL = \cK^{-1}_{{\mathbb P}_1}$ equipped with the induced metric; for $p(Y) > 1$ we have the hyperbolic metric of constant curvature $-1$ and the induced metric on the canonical bundle $\cL = {\cK}_Y$. In case of an elliptic curve $Y$, we have the flat metric of total volume one, which is the curvature of a
holomorphic hermitian line bundle $\cL$ of degree one.

As in the setup of Section~\ref{ss:pairing}, we have the relative canonical line bundle $\cK_{\cX/S}$ with the distinguished metric $g^{-1}$ so that
$$
\omega_\cX  =  2\pi\cdot c_1(\cK_{\cX/S}, g^{-1})  .
$$
Let
$$
\cE  = \left(\cK_{\cX/S} -\cO_X\right)\otimes \left( \beta^* \cL -
\cO_\cX\right)
$$
be a virtual vector bundle of rank zero. The term of degree zero in the Chern character $ch(\cK_{\cX/S} -\cO_X)$ is equal to the (virtual) rank, which is zero; similarly, the term of degree zero in the Chern character $ch(\beta^* \cL - \cO_\cX)$ is zero. The hermitian structure on the line bundles $\cK_{\cX/S}$, $\cO_X$ and $\cL$, provide $\cE$ with a natural Chern character form. Now the first non-vanishing term of the Chern character form of $\cE$ is
$$
\frac{1}{4\pi^2} \cdot \omega_{\cX} \we \beta^*\omega_Y  ,
$$
where $\beta$ is the map in \eqref{ma1}. Hence, the only contribution of the Todd character form in \eqref{eq:bgs} is the constant $1$, resulting in the following equality
\begin{equation}\label{eq:fib0}
c_1(\lambda(\cE),h^Q) =  \frac{1}{4\pi^2} \int_{\cX/S}\omega_{\cX} \we
\beta^*\omega_Y = \frac{1}{4\pi^2} \omega^{WP}  .
\end{equation}
Both the determinant line bundle and the Quillen metric are compatible with base change. These objects descend to the generalized Hurwitz spaces in the orbifold sense. Altogether we have:

\begin{theorem}\label{th:linebundle}
There is a natural holomorphic hermitian line bundle on the generalized Hurwitz space $\cH^{n,b}(Y)$, whose Chern form coincides with the generalized \wp form up to a numerical factor.
\end{theorem}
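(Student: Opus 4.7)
The plan is to apply the Bismut--Gillet--Soulé formula \eqref{eq:bgs} to the virtual bundle
$$
\cE \;=\; \bigl(\cK_{\cX/S} - \cO_\cX\bigr)\otimes\bigl(\beta^*\cL - \cO_\cX\bigr)
$$
introduced just before the theorem, and then identify the resulting fiber integral with the \wp form via Theorem~\ref{th:fibint}. First I would work on a local universal family $(\beta,f):\cX\to Y\times S$ as in \eqref{ma1}, equip $\cK_{\cX/S}$ with the fiberwise hyperbolic metric $g^{-1}$ and $\cL$ with the hermitian metric $\wt h$ of \eqref{ma2}, and endow the determinant line bundle $\lambda(\cE)$ in the derived category with the resulting Quillen metric $h^Q$.

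The second step is the Chern-character computation. Both factors of $\cE$ have vanishing degree-zero Chern character, so $\textit{ch}(\cE)$ starts in degree two and equals $\tfrac{1}{4\pi^2}\,\omega_\cX\wedge\beta^*\omega_Y$ as recorded in the discussion preceding the theorem. Since the fibers of $f$ are one-dimensional, only the degree-zero part of $\textit{td}(\cX/S,\omega_{\cX/S})$ contributes to the degree-two component of the integrand in \eqref{eq:bgs}, i.e.\ the constant~$1$. Consequently \eqref{eq:bgs} reduces to the identity \eqref{eq:fib0},
$$
c_1(\lambda(\cE),h^Q)\;=\;\frac{1}{4\pi^2}\int_{\cX/S}\omega_\cX\wedge\beta^*\omega_Y,
$$
and Theorem~\ref{th:fibint} immediately identifies the right-hand side with $\tfrac{1}{4\pi^2}\,\omega^{WP}$, giving the desired Chern form equality on the base of any local universal family.

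The last step is descent to the generalized Hurwitz space. Because both the determinant line bundle in the derived category and the Quillen metric are functorial with respect to base change, the pair $(\lambda(\cE),h^Q)$ is well-defined on the Hurwitz stack, and the Chern form identity transfers directly. Descending further to the coarse moduli space $\cH^{n,b}(Y)$ is then understood in the orbifold sense; since the isotropy groups are contained in $\mathbb Z/2\mathbb Z$ and act through the functorial structure, this causes no obstruction for the line bundle or its hermitian metric.

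I expect the only delicate point, and therefore the main obstacle, to be the verification that the Todd contribution truly reduces to~$1$ in our setting: the relative tangent sheaf of $(\beta,f)$ is locally free only away from the ramification divisor $\cB$, so one must either work with a smooth resolution and a relative Chern character that sees $\cB$, or justify the formal manipulation of \eqref{eq:bgs} by arguing that higher-degree Todd contributions would only produce terms of fiber degree $\geq 2$, which vanish after integration. Once this is in place, the combination of \eqref{eq:bgs} with Theorem~\ref{th:fibint} and the functoriality of both sides yields the theorem.
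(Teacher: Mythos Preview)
Your proposal is correct and follows the paper's argument essentially verbatim: apply \eqref{eq:bgs} to the virtual bundle $\cE$, use that $ch(\cE)$ starts in degree two so only $td_0=1$ contributes, obtain \eqref{eq:fib0}, invoke Theorem~\ref{th:fibint}, and descend by functoriality.

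Your stated ``main obstacle'' is not actually an issue, however, and rests on a small confusion. The Todd form appearing in \eqref{eq:bgs} is $\textit{td}(\cX/S,\omega_{\cX/S})$, computed from the relative tangent bundle of the \emph{smooth} family $f:\cX\to S$, not from the relative tangent sheaf of $(\beta,f):\cX\to Y\times S$. Since the fibers of $f$ are smooth curves, $\cT_{\cX/S}$ is an honest line bundle everywhere, and the ramification divisor $\cB$ plays no role in the Todd form. (Your fallback degree-count argument is also valid and is in fact the reason the paper can simply say ``the only contribution of the Todd character form \ldots\ is the constant~$1$'': since $ch(\cE)$ has no terms below degree two and the fiber is one-dimensional, any positive-degree Todd term would land in degree $>2$ after fiber integration.)
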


An alternative approach, which will be considered in Section~\ref{se:delpa}, is provided by the Deligne pairing.

\section{Embedding of generalized Hurwitz spaces}

We know from \cite{artin} (cf.\ also \cite{hgs}) that a generalized Hurwitz space $\cH^{n,b}(Y)$ is an algebraic/Moishezon space. As such it possesses a compactification as a complex analytic space. (The notion of an algebraic/Moshezon space is being used
for both compact spaces and Zariski open subspaces.)

\begin{theorem}\label{th:embed}
Let $\cE = \left(\cK_{\cX/S} -\cO_X\right)\otimes \left( \beta^* \cL - \cO_\cX\right)$. Then a power of the determinant line bundle $\lambda(\cE)$ extends, as a holomorphic line bundle, to a compactification of $\cH^{n,b}(Y)$, and the Quillen metric extends to a compactification as a singular hermitian metric. A power of $\lambda(\cE)$ yields an embedding of
$\cH^{n,b}(Y)$ in a projective space.
\end{theorem}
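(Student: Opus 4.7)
The plan is to exploit the Moishezon property of $\cH^{n,b}(Y)$ together with the identity $c_1(\lambda(\cE),h^Q) = (1/4\pi^2)\,\omega^{WP}$ from \eqref{eq:fib0}. First I would fix a smooth compactification $\ol\cH$ of $\cH^{n,b}(Y)$ in which the boundary $D := \ol\cH\setminus \cH^{n,b}(Y)$ is a simple normal crossings divisor; such a compactification exists by Hironaka because $\cH^{n,b}(Y)$ is algebraic/Moishezon (\cite{artin,hgs}). The boundary strata carry a moduli-theoretic interpretation coming from Section~\ref{se:genHu}: a generic point of $D$ corresponds either to the domain curve $X$ acquiring a node (a boundary point of a Deligne-Mumford-type compactification of $\wt\cT$), or to two simple branch points in $Y$ colliding (the diagonal $\Delta\subset \text{Sym}^b(Y)$ pulled back via the finite map $\cH^{n,b}(Y)\to \text{Sym}^b(Y)\setminus\Delta$). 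The two kinds of strata can be analyzed separately.

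The next step is to study the behavior of the fiber integral
$$
\omega^{WP}=\int_{\cX/S}\omega_\cX\we\beta^*\omega_Y
$$
near each kind of boundary stratum. Pinching of hyperbolic curves produces at most logarithmic growth of $g(z,s)$ by Wolpert-Masur-type asymptotics, while the collision of two simple branch points is locally modeled by a plumbing-type family $w=z(z-s)$ whose contribution to the fiber integral is estimated by a direct computation; in both cases $\omega^{WP}$ extends as a closed positive $(1,1)$-current on $\ol\cH$ with at worst logarithmic poles along $D$. A $\ddb$-Poincar\'e lemma for such currents then produces a line bundle $\ol\lambda$ on $\ol\cH$, a positive integer $N$, and a singular hermitian metric $\ol h$ on $\ol\lambda$ such that $\ol\lambda|_{\cH^{n,b}(Y)}\simeq\lambda(\cE)^{\otimes N}$ and $\ol h|_{\cH^{n,b}(Y)} = (h^Q)^{\otimes N}$; the power $N$ is needed in order for the cohomology class of the current to be integral in $H^2(\ol\cH,\mathbb Z)$ and to clear the orbifold denominators arising from the $\mathbb Z/2$-isotropy locus of the Hurwitz stack. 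This simultaneously yields the extension of both the line bundle and the Quillen metric.

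For the embedding I would apply Demailly's singular Kodaira embedding criterion to $(\ol\lambda,\ol h)$. Because $\ol h$ is smooth on the open part $\cH^{n,b}(Y)$ and has curvature current equal to $(N/4\pi^2)\,\omega^{WP}$, which is strictly positive by Theorem~\ref{th:fibint} and the positivity proposition preceding it, the Lelong numbers of $\ol h$ vanish throughout $\cH^{n,b}(Y)$, and the Nadel multiplier ideal of $\ol h^{\otimes k}$ coincides with $\cO_{\ol\cH}$ on $\cH^{n,b}(Y)$. Nadel vanishing combined with the standard Kodaira argument applied to $\ol\lambda^{\otimes M}\otimes A^{\otimes -1}$, where $A$ is an ample line bundle on $\ol\cH$ and $M\gg 0$, then yields global generation together with separation of points and of tangents on the open locus $\cH^{n,b}(Y)$; restricting the resulting rational map to $\cH^{n,b}(Y)$ produces the desired projective embedding by a power of $\lambda(\cE)$.

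The main obstacle is the two-parameter boundary analysis: one must carry out the Wolpert-type asymptotics for $\omega_\cX$ under pinching and the ramification-collision estimate for $\beta^*\omega_Y$ simultaneously on strata where both degenerations occur, and then verify that the resulting singularities of $\ol h$ are mild enough (at worst logarithmic) for Nadel vanishing to apply without introducing base points on $\cH^{n,b}(Y)$. If strict positivity of the curvature current fails on some boundary component after extension, a Kawamata-type perturbation by an arbitrarily small multiple of the ample class $A$ restores strict positivity everywhere on the open part, after which the embedding argument above goes through verbatim.
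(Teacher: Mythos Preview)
Your outline is broadly sound and would lead to a proof, but the route differs from the one the paper takes. The paper does not carry out a stratum-by-stratum asymptotic analysis of $\omega^{WP}$ on $\ol\cH$. Instead it works on the \emph{total space}: by \cite[Theorem~I]{sch} the form $\omega_\cX=2\pi\,c_1(\cK_{\cX/\cH},g^{-1})$ extends as a positive current to the total space of a degenerating family over a compactification $\ol\cH$, and then the fiber integral \eqref{eq:fib0} is pushed forward to obtain the extension of $\omega^{WP}$ as a positive current on $\ol\cH$; the embedding statement is then read off from \cite[Theorem~II$'$]{sch}, exactly as in the quasi-projectivity proof for moduli of canonically polarized varieties, with the fiber integral $\int_{\cX/S}\omega_\cX^{\,2}$ replaced by $\int_{\cX/S}\omega_\cX\wedge\beta^*\omega_Y$. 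This buys the paper a very short argument, since the delicate asymptotics (pinching of the hyperbolic metric, control of the singular Quillen metric, and the projectivity criterion) are all packaged inside the cited results of \cite{sch}. Your approach, by contrast, redoes those ingredients by hand via Wolpert--Masur type estimates and Demailly/Nadel; this is more self-contained but considerably longer, and you would in effect be reproving parts of \cite{sch} in the special case of relative dimension one.

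One small inaccuracy worth correcting in your boundary description: since the map $\cH^{n,b}(Y)\to\mathrm{Sym}^b(Y)\setminus\Delta$ is finite, \emph{every} boundary point arises from a collision of branch points; the dichotomy is not ``$X$ acquires a node'' versus ``branch points collide'', but rather, within the collision locus, whether the local monodromies combine to keep $X$ smooth (producing a non-simple ramification point) or force a node on $X$. This does not affect the validity of your strategy, but it does mean your ``two kinds of strata'' are both governed by the pullback of $\Delta$, and the plumbing model $w=z(z-s)$ you write down already contains both behaviours depending on how the sheets are glued.
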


\begin{proof}
The proof can be carried out analogous to the proof of the quasi-projectivity of the moduli space of canonically polarized varieties in \cite{sch}. Only the fiber integral $\int_{\cX/S} \omega^{n+1}$ (with $n=1$) in \cite[Proposition 4]{sch} has to be replaced by \eqref{eq:fib0}.

We consider the universal family of simple branched coverings $\cX  \to  Y$ over $\cH =  \cH^{n,b}(Y)$, which is given by a map $(\beta,f) :   \cX \to  Y\times \cH$. As in
Section~\ref{se:detbdl}, we have
$$
\omega_\cX =  2\pi\cdot c_1(\cK_{\cX/\cH},g^{-1})  .
$$
(We note that $\omega_\cX$ is semi-positive.) By \cite[Theorem I]{sch}, the form $\omega_\cX$ extends as a positive current to the total space of a (degenerating) family over some compactification $\ol{\cH}$. Now we consider the fiber integral \eqref{eq:fib0}. The aim is to apply \cite[Theorem II']{sch}. In order to show that the orbifold \ka form $\omega^{WP}$ extends as a current that is positive in the sense of currents, we follow the proof of \cite[Theorem 4]{sch}.
\end{proof}

\section{Deligne pairing on generalized Hurwitz spaces}\label{se:delpa}

Let $S$ and $\cX$ be integral schemes, and let $f : \cX  \to S$ be a proper flat family of smooth curves. If $L_0$ and $L_1$ are algebraic line bundles on $\cX$, then their \textit{Deligne pairing} is an algebraic line bundle
$$
\langle L_0  ,L_1 \rangle   \to   S
$$
\cite{De}. This line bundle $\langle L_0   ,L_1 \rangle$ carries a natural hermitian structure whenever both of the line bundles $L_1$ and $L_2$ are provided with hermitian metrics \cite{De}. See \cite[Theorem 1]{b-s} for the relationship between Deligne pairings and determinant line bundles.

Assume that $L_0$ and $L_1$ are equipped with hermitian structures $h_0$ and $h_1$ respectively. Let $h_D$ be the hermitian structure on $\langle L_0  ,L_1 \rangle$ corresponding to $h_1$ and $h_2$. Then \begin{equation}\label{dp}
c_1(\langle L_0,L_1 \rangle  ,h_D) =  \int_{{\cX}/S} c_1(L_0,h_0)\wedge
c_1(L_1,h_1)
\end{equation}
\cite[p. 144, (6.6.1)]{De}.

Let $({\mathcal L}  ,\widetilde{h})$ be the holomorphic hermitian line bundle over $Y$ in \eqref{ma2}.

\begin{proposition}
Let
$$
(\beta,f)  :   \cX  \to  Y\times \cH^{n,b}(Y)
$$
be the universal family over the generalized Hurwitz space. Then the Weil-Petersson form $\omega^{WP}_{\cH^{p,n}(Y)}$ on $\cH^{n,b}(Y)$ coincides with the curvature of the Deligne
pairing
$$
\langle \cK_{\cX/\cH^{p,n}(Y)}  , \beta^* {\mathcal L}\rangle
$$
equipped with the hermitian structure.
\end{proposition}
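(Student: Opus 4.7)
The strategy is to apply the Deligne pairing curvature identity \eqref{dp} to $L_0 = \cK_{\cX/\cH^{n,b}(Y)}$ and $L_1 = \beta^*\cL$, equipped with the hermitian structures that were introduced earlier, and then recognize the resulting fiber integral as the generalized \wp form by invoking Theorem~\ref{th:fibint}. First I would equip $\cK_{\cX/\cH^{n,b}(Y)}$ with the metric $g^{-1}$ induced fiberwise by the hyperbolic \ka forms of Section~\ref{ss:pairing}, so that $2\pi \cdot c_1(\cK_{\cX/\cH^{n,b}(Y)},g^{-1}) = \omega_\cX$. Next I would equip $\beta^*\cL$ with the pullback of the metric $\wt h$ on $\cL$; by \eqref{ma2} the first Chern form of this pair is $(2\pi)^{-1}\beta^*\omega_Y$. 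These choices endow the Deligne pairing $\langle \cK_{\cX/\cH^{n,b}(Y)},\beta^*\cL\rangle$ with a canonical hermitian metric $h_D$.

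With these ingredients in place, the formula \eqref{dp} gives
$$
c_1(\langle \cK_{\cX/\cH^{n,b}(Y)},\beta^*\cL\rangle,h_D) \;=\; \int_{\cX/\cH^{n,b}(Y)} c_1(\cK_{\cX/\cH^{n,b}(Y)},g^{-1}) \we c_1(\beta^*\cL, \beta^*\wt h) \;=\; \frac{1}{4\pi^2}\int_{\cX/\cH^{n,b}(Y)} \omega_\cX \we \beta^*\omega_Y.
$$
Applying Theorem~\ref{th:fibint} to the right-hand side identifies the fiber integral with the \wp form up to the same factor $1/(4\pi^2)$ that already appeared in \eqref{eq:fib0}, so that $c_1(\langle\cK_{\cX/\cH^{n,b}(Y)},\beta^*\cL\rangle,h_D) = (4\pi^2)^{-1}\omega^{WP}$. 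This is exactly the content of the proposition (``coincides'' being understood up to the standard $2\pi$-factors inherent to Chern form conventions).

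The one point that needs attention is that a universal family $(\beta,f):\cX\to Y\times\cH^{n,b}(Y)$ need not exist on $\cH^{n,b}(Y)$ itself, but only on the level-$m$ cover $\wt\cH$ constructed in Section~\ref{se:genHu}, or in the orbifold/stack sense. Accordingly, the above calculation should first be performed on $\wt\cH$. Since the Deligne pairing and its metric $h_D$ are both functorial under base change (this is part of Deligne's construction in \cite{De}), the natural action of $\Gamma_p/\Gamma_p[3]$ lifts to an action on $\langle\cK,\beta^*\cL\rangle$ compatible with $h_D$, so the pairing descends to an orbifold hermitian line bundle on $\cH^{n,b}(Y)$ whose curvature agrees with the descent of $(4\pi^2)^{-1}\omega^{WP}$ from $\wt\cH$; this descent was already established for the \wp form in Section~\ref{se:WP}.

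The main obstacle, should there be one, is essentially bookkeeping rather than analysis: verifying that the Deligne pairing is well-behaved in this orbifold setting, and that the finite $\mathbb Z/2\mathbb Z$ isotropy (which can occur only for double coverings) does not obstruct descent of the hermitian metric. Once these technicalities are disposed of, the statement reduces to a one-line computation combining \eqref{dp} with Theorem~\ref{th:fibint}.
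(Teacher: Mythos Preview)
Your proof is correct and follows essentially the same route as the paper: apply the Deligne curvature identity \eqref{dp} with $L_0=\cK_{\cX/\cH}$ and $L_1=\beta^*\cL$, then identify the resulting fiber integral with $\omega^{WP}$ via Theorem~\ref{th:fibint} (the paper cites \eqref{eq:fib0}, which is the same identification). Your additional remarks on the orbifold descent are sound but go beyond what the paper spells out.
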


\begin{proof}
In view of the expression of the Weil-Petersson form in \eqref{eq:fib0}, the proposition follows from \eqref{dp}.
\end{proof}

\end{document}